\documentclass[10pt, letterpaper,reqno]{amsart}
\usepackage{amssymb}
\usepackage{amsmath}
\usepackage{amsfonts, color}
\usepackage{graphicx}
\usepackage{mathrsfs,amssymb, slashed, cite}

\usepackage{hyperref}
\usepackage{cleveref}

\let\Re=\undefined\DeclareMathOperator*{\Re}{Re}
\let\Im=\undefined\DeclareMathOperator*{\Im}{Im}

\newcommand{\R}{\mathbb{R}}
\newcommand{\C}{\mathbb{C}}

\newtheorem{corollary}{Corollary}[section]

\newtheorem{lemma}{Lemma}[section]

\newtheorem{proposition}{Proposition}[section]

\newtheorem{theorem}{Theorem}[section]
\numberwithin{equation}{section}

\newcommand{\eps}{\varepsilon}

\newcommand{\qtq}[1]{\quad\text{#1}\quad}

\allowdisplaybreaks

\begin{document}

\title[Approximate traveling waves]{Approximate traveling waves for the \\ focusing NLS with an external potential}

\author[L. Baker]{Luke Baker}
\email{lukebake@uoregon.edu}
\address{Department of Mathematics, University of Oregon, Eugene, OR, USA}

\author[X. Li]{Xuemei Li}
\email{xuemei\_li@mail.bnu.edu.cn}
\address{Laboratory of Mathematics and Complex Systems, Ministry of Education, School of Mathematical Sciences, Beijing Normal University, Beijing 100875, People’s Republic of China}

\author[J. Murphy]{Jason Murphy}
\email{jamu@uoregon.edu}
\address{Department of Mathematics, University of Oregon, Eugene, OR, USA}

\maketitle

\begin{abstract} We construct approximate traveling wave solutions for the $3d$ focusing cubic NLS perturbed by a rapidly decaying external potential.  As an application, we demonstrate that there exists a forward-global, $H^1$ bounded, nonscattering solution at any mass-energy higher than the ground state threshold for the unperturbed equation.
\end{abstract}

\section{Introduction}\label{S:intro}

We consider the focusing cubic nonlinear Schr\"odinger equation (NLS) in three dimensions in the presence of a real-valued external potential:
\begin{equation}\label{NLS}
i\partial_t u = -\Delta u + Vu - |u|^2 u,
\end{equation}
which we view as a perturbation of the standard focusing NLS:
\begin{equation}\label{NLS0}
i\partial_t u = -\Delta u - |u|^2 u.
\end{equation}

We restrict attention to nonnegative, repulsive, rapidly decaying potentials.  In particular, we consider $V\in C^1(\R^3;\R)$ such that
\begin{equation}\label{V}
V\geq 0,\quad x\cdot\nabla V\leq 0,\qtq{and} \forall c>0\  \sup_{x\in\R^3} e^{c|x|}\bigl\{|V(x)|+|\nabla V(x)|\bigr\} <\infty.
\end{equation}

The \emph{ground state} $Q$ is the unique nonnegative, radially decreasing solution to the elliptic equation 
\begin{equation}\label{Qeq}
-Q = -\Delta Q - Q^3,
\end{equation}
which produces the ground state solution $e^{it}Q(x)$ to \eqref{NLS0}.  More generally, given any $\alpha>0$ and $\xi\in\R^3$, we can produce the following traveling wave solution to \eqref{NLS0}:
\begin{equation}\label{W1}
W_0(t,x) =\alpha  e^{i[\alpha^2 t + x\cdot\xi - |\xi|^2 t]}Q(\alpha[x-2t\xi]). 
\end{equation}

Our goal in this work is to construct forward-global solutions to \eqref{NLS} that behave like traveling waves of the form \eqref{W1} as $t\to\infty$. We will discuss the motivation and context for this problem below. 

As we will explain, our techniques will require us to impose some restrictions on the traveling wave parameters $(\alpha,\xi)$. To state these restrictions precisely, we first introduce a parameter $\delta_0>0$ such that 
\[
|\Psi(x)| \lesssim e^{-\delta_0|x|}\qtq{for}\Psi\in\{Q,\nabla Q,Y_1^\pm, \nabla Y_1^\pm\},
\] 
where $Y_1^\pm$ are the $L^2$-normalized eigenfunctions for the operator arising from the linearization of \eqref{NLS0} around $Q$ (see Section~\ref{S:notation}).  We denote the eigenvalues corresponding to $Y_1^\pm$ by $\pm\lambda_1$.  Given $V$ satisfying \eqref{V} and $\delta\in(0,\delta_0)$, we then define
\begin{equation}\label{Pdelta}
\mathcal{P}_\delta:=\biggl\{(\alpha,\xi)\in(1,\infty)\times\R^3\backslash\{0\}: \alpha>\max\biggl\{ \frac{C\|\nabla V\|_{L^4}}{\delta|\xi|},\frac{\delta|\xi|}{\lambda_1}\biggr\}\biggr\}\footnote{Here $C\geq 1$ is a universal constant.}, 
\end{equation}
and we define the set of admissible traveling wave parameters by
\[
\mathcal{P}=\bigcup_{\delta\in(0,\delta_0)}\mathcal{P}_\delta.
\] 

Our main result is the following theorem.

\begin{theorem}\label{T} Suppose $V\in C^1(\R^3;\R)$ satisfies \eqref{V}.  For any $(\alpha,\xi)\in\mathcal{P}$, there exists $T_0>0$ and a solution $u:[T_0,\infty)\times\R^3\to\C$ to \eqref{NLS} such that 
\begin{equation}\label{H1C}
\lim_{t\to\infty} \| u(t) - W_0(t) \|_{H^1(\R^3)} = 0,\footnote{If $(\alpha,\xi)\in\mathcal{P}_\delta$, then the convergence is obtained with a rate of $e^{-\frac12\delta\alpha|\xi| t}$.}
\end{equation}
where
\[
W_0(t,x)=\alpha e^{i[\alpha^2 t + x\cdot\xi - |\xi|^2 t]}Q(\alpha[x-2t\xi]).
\]
\end{theorem}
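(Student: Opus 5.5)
We will construct $u$ by a backward-in-time fixed point argument around $W_0$, in the style of Duyckaerts--Merle and Duyckaerts--Roudenko, treating $VW_0$ as a small forcing term. Write $u=W_0+w$. Since $W_0$ solves \eqref{NLS0}, the correction $w$ satisfies
\[
i\partial_t w = -\Delta w + Vw + VW_0 - \bigl[|W_0+w|^2(W_0+w)-|W_0|^2W_0\bigr].
\]
We treat $F:=VW_0$ as the error. Fix $(\alpha,\xi)\in\mathcal{P}_\delta$. The soliton is centered at $2t\xi$ and has width $\alpha^{-1}$. Combining this with the decay assumption \eqref{V} and $|Q(y)|\lesssim e^{-\delta_0|y|}$, we expect
\[
\|F(t)\|_{H^1}\lesssim_\alpha e^{-\delta\alpha|\xi| t}
\]
(up to polynomial factors). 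To see this, split space into $|x|<t|\xi|$, where $Q(\alpha[x-2t\xi])\lesssim e^{-\delta_0\alpha t|\xi|}$, and its complement, where $V$ and $\nabla V$ are $O(e^{-c t|\xi|})$ for any $c$.

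We then solve for $w$ on $[T_0,\infty)$ with $w(t)\to0$ as $t\to\infty$, working in the weighted space
\[
\|w\|_X:=\sup_{t\geq T_0} e^{\frac12\delta\alpha|\xi| t}\,\|w\|_{S^1([t,\infty))},
\]
where $S^1$ is a Strichartz norm at $H^1$ regularity. The linear part of the equation splits into pieces of different difficulty.

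\begin{itemize}
\item \emph{Nonlinear terms.} Terms quadratic and cubic in $w$ are handled directly by Strichartz estimates.
\item \emph{Terms linear in $w$.} The terms $2|W_0|^2w+W_0^2\bar w$ are not small. We cannot close by plain Strichartz estimates, since these terms only gain a factor $\|W_0\|$ and not exponential smallness.
\end{itemize}

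To deal with the linear terms, we modulate. Passing to the moving frame of $W_0$ via a Galilean transform and the scaling by $\alpha$, we write $w=h(\alpha^2 t,\ldots)$. The operator becomes the standard linearized operator $\mathcal{L}$ around $Q$, plus $V$-dependent terms that are again exponentially small. We then use the spectral decomposition of $\mathcal{L}$:
\begin{itemize}
\item the generalized kernel, spanned by symmetries of $Q$;
\item the eigenfunctions $Y_1^\pm$ with eigenvalues $\pm\lambda_1$;
\item the coercive remainder.
\end{itemize}
Following Duyckaerts--Merle, we use an energy/virial-type Lyapunov functional, coercive on the remainder, and integrate backward from $t=\infty$. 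The unstable mode $Y_1^+$ is integrated from infinity, and the stable mode $Y_1^-$ is handled by the same formula. The eigenvalue $\lambda_1$ in rescaled time gives the rate $e^{\pm\alpha^2\lambda_1 t}$. Integrating $e^{-\alpha^2\lambda_1(t-s)}$ against a forcing of size $e^{-\delta\alpha|\xi|s}$ is harmless provided $\delta\alpha|\xi|<\alpha^2\lambda_1$, that is, $\alpha>\delta|\xi|/\lambda_1$. This is the second constraint in $\mathcal{P}_\delta$.

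The kernel directions (phase, translation, scaling, Galilean) produce at most polynomial growth. They are absorbed either by the exponential decay of the forcing or by modulating the parameters, which we can leave unmodulated since the decay is exponential.

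The \emph{main obstacle} is the linear term $Vw$. The perturbation $w$ need not be localized near the soliton, so $Vw$ is not exponentially small a priori, and we must bound it by a small multiple of $\|w\|_X$. In the energy estimate, its contribution appears through commutators such as $\int \nabla V|w|^2$. Bounding this by Hölder with $\|\nabla V\|_{L^4}$ and Sobolev gives a loss of $C\|\nabla V\|_{L^4}\|w\|_{H^1}^2$. When integrated in time against the weight, this loss must be beaten by the decay rate, which leads exactly to the condition $\delta\alpha|\xi|>C\|\nabla V\|_{L^4}$, the first constraint in $\mathcal{P}_\delta$. The potential is handled in the modified energy $\int|\nabla w|^2+V|w|^2$, using $V\geq0$.

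With these estimates, the map is a contraction on a ball of $X$ for $T_0$ large. Its fixed point gives $u=W_0+w$ solving \eqref{NLS}, with $\|u(t)-W_0(t)\|_{H^1}\lesssim e^{-\frac12\delta\alpha|\xi|t}$, which proves \eqref{H1C}.
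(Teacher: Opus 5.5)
\textbf{There is a genuine gap, and it is in the $Y^-$ direction, which is the central difficulty of the problem.}

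Let $A^\pm(t)$ be the components of the perturbation along $Y_\alpha^\pm$. They satisfy $\dot A^\pm=\pm\alpha^2\lambda_1A^\pm+\mathcal{E}^\pm$. In your scheme the only available bound is $|\mathcal{E}^\pm(s)|\lesssim e^{-\delta\alpha|\xi|s}$.

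For $A^+$, Duhamel from $t=\infty$ gives $A^+(t)=-\int_t^\infty e^{-\alpha^2\lambda_1(s-t)}\mathcal{E}^+(s)\,ds$. This is harmless, with no condition at all.

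For $A^-$, ``the same formula'' reads $A^-(t)=-\int_t^\infty e^{\alpha^2\lambda_1(s-t)}\mathcal{E}^-(s)\,ds$. With the available bound, this integral is controlled only if $\delta\alpha|\xi|>\alpha^2\lambda_1$. That is the opposite of the second constraint in $\mathcal{P}_\delta$.

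The inequality $\delta\alpha|\xi|<\alpha^2\lambda_1$ that you invoke is the condition for integrating the kernel $e^{-\alpha^2\lambda_1(t-s)}$ over $s\in[T_0,t]$, i.e.\ forward from a finite time. That is not what a scheme ``integrated backward from $t=\infty$'' does. Backward in time, $Y_\alpha^-$ is the unstable direction. For parameters in $\mathcal{P}_\delta$ it cannot be recovered by integration from infinity, so your fixed point does not close in this component.

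\textbf{How the paper gets around this.} It works on finite intervals $[T_0,T^*]$, with data prescribed at $T^*$ and $A^-(T^*)=a\in I^*$.
\begin{enumerate}
\item The Lyapunov functional $H=E_0+\tfrac12(\alpha^2+|\xi|^2)M-\tfrac12\xi\cdot P$, together with modulation, coercivity and Gronwall, controls $z,\gamma,y,A^+$. This is where the first constraint enters, much as you describe. The control holds as long as $\mathcal{N}(t;a)=|e^{\frac12\delta\alpha|\xi|t}A^-(t)|^2\le 1$.
\item The second constraint is used only to show $\partial_t\mathcal{N}<0$ whenever $\mathcal{N}=1$.
\item This transversality makes the exit time $\tau(a)$ continuous, with $\tau(a)=T^*$ on $\partial I^*$.
\item The intermediate value theorem applied to $a\mapsto A^-(\tau(a))$ then contradicts $\mathcal{N}(\tau(a);a)=1$. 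Hence some $a$ yields a solution controlled on all of $[T_0,T^*]$.
\item A compactness argument (tightness at $T_0$, weak limits, stability of the flow) passes to $T^*\to\infty$.
\end{enumerate}

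\textbf{What a repair would need.} You would need one of the following:
\begin{itemize}
\item this shooting step; or
\item a genuine Lyapunov--Perron formulation, in which $A^-(T_0)$ is prescribed and $A^-$ is propagated forward, while all other components are solved from $+\infty$.
\end{itemize}
The second option requires more than your proposal provides. The Lyapunov/Gronwall estimate is an a priori bound for actual nonlinear solutions, and a contraction cannot be run on it directly. You would have to turn it into linear energy estimates, including difference estimates, for the linearized flow around the moving soliton with the translated potential $V(\cdot+2t\xi)$. None of this mixed two-point structure is set up in your proposal.
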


\smallskip

\emph{Context and motivation.} 

\smallskip

Our main result is motivated by the study of the long-time dynamics of NLS with an external potential. Recent work in nonlinear dispersive PDE has aimed to classify the dynamics of solutions according to their orientation in the mass-energy plane (see e.g. \cite{DuyckaertsRoudenko, DHR, MMZ, Hong, DLR, KMVZ, KOPV, KMV}).  Here the conserved \emph{mass} of solutions to \eqref{NLS} is defined by 
\[
M(u) = \int_{\R^3} |u|^2\,dx,
\]
while the conserved \emph{energy} is given by 
\[
E(u) = E_0(u) + \tfrac12\int_{\R^3} V|u|^2\,dx,\qtq{where} E_0(u) := \int_{\R^3} \tfrac12|\nabla u|^2 - \tfrac14|u|^4\,dx.
\]
In particular, $E_0$ is the conserved energy for the standard NLS \eqref{NLS0}. For later use, let us also define the \emph{momentum}, which we note is conserved for \eqref{NLS0} but \emph{not} for \eqref{NLS}:
\[
P(u):=2\Im \int \bar u \nabla u\,dx.
\]

For potentials satisfying \eqref{V}, one can obtain the following theorem. 

\begin{theorem}\label{T:Hong} Let $V\in C^1(\R^3;\R)$ (not identically zero) satisfy \eqref{V}. Suppose that $u_0\in H^1(\R^3)$ satisfies
\begin{equation}\label{me1}
M(u_0)E(u_0) \leq M(Q)E_0(Q),
\end{equation}
and let $u$ be the corresponding solution to \eqref{NLS}. 

(i) If
\begin{equation}\label{me2}
\|u_0\|_{L^2}\|(-\Delta+V)^{\frac12}u_0\|_{L^2} < \|Q\|_{L^2}\|\nabla Q\|_{L^2},
\end{equation}
then $u$ scatters\footnote{We say $u$ scatters in $H^1$ if there exist $u_\pm\in H^1$ such that $\|u(t)-e^{it\Delta}u_\pm\|_{H^1}=0$. In the context of \eqref{NLS} with $V$ satisfying \eqref{V}, scattering is equivalent to the finiteness of the global $L_{t,x}^5$ space-time norm.} in both time directions. 

(ii) If 
\begin{equation}\label{me3}
\|u_0\|_{L^2}\|(-\Delta+V)^{\frac12}u_0\|_{L^2} > \|Q\|_{L^2}\|\nabla Q\|_{L^2},
\end{equation}
then $u$ blows up its $H^1$ norm in both time directions. If $xu_0\in L^2$, then we can guarantee blowup in finite time.

(iii) Equality in \eqref{me2} or \eqref{me3} under the assumption \eqref{me1} is impossible.  In particular, there is no ground state for \eqref{NLS}, in the sense that the sharp Galigardo--Nirenberg inequality
\[
\|f\|_{L^4(\R^3)} \leq C_0 \|f\|_{L^2(\R^3)}^{\frac14} \|(-\Delta+V)^{\frac12}f\|_{L^2(\R^3)}^{\frac34} 
\]
admits no optimizer in $H^1(\R^3)\backslash\{0\}$. 

(iv) There exists a sequence of global solutions $u_n$ to \eqref{NLS} satisfying
\begin{align*}
&M(u_n)E(u_n)\nearrow M(Q) E_0(Q),\\
& \|u_n(0)\|_{L^2}\|(-\Delta+V)^{\frac12}u_n(0)\|_{L^2} \nearrow \|Q\|_{L^2}\|\nabla Q\|_{L^2},
\end{align*} 
and
\[
\lim_{n\to\infty} \|u_n\|_{L_{t,x}^5(\R\times\R^3)}=\infty. 
\]
\end{theorem}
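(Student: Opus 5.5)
The plan is to treat Theorem~\ref{T:Hong} as the combination of a sharp variational analysis for the operator $-\Delta+V$ with the now-standard concentration-compactness/rigidity and virial arguments. Everything hinges on one fact: since $V\geq 0$ and $V\not\equiv 0$, the sharp constant $C_0$ in the Gagliardo--Nirenberg inequality with $(-\Delta+V)^{1/2}$ coincides with the free sharp constant $C_{GN}$ (attained by $Q$), yet it is never attained. First, $\|(-\Delta+V)^{1/2}f\|_{L^2}\geq\|\nabla f\|_{L^2}$ gives $C_0\leq C_{GN}$. Second, testing with the translates $Q(\cdot-y)$, $|y|\to\infty$, and using the decay of $V$ in \eqref{V} gives $\int V|Q(\cdot-y)|^2\to0$, hence $C_0\geq C_{GN}$. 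Third, if some $f$ were an optimizer, then $\int V|f|^2=0$ (otherwise we would have strict inequality against the free inequality), so $f$ would be a free optimizer, i.e.\ $f=cQ(\lambda(\cdot-y))$ up to phase. Such $f$ is everywhere positive in modulus, which forces $V\equiv0$, a contradiction. This gives the non-existence claim in (iii).

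With the sharp inequality available, the usual energy-trapping argument goes through. Write $K=\|u\|_{L^2}\|(-\Delta+V)^{1/2}u\|_{L^2}$ and consider
\[
f(y)=\tfrac12y^2-\tfrac14C_{GN}^4\|u\|_{L^2}\,y^{3}\cdot\|u\|_{L^2}^{-1}\!,
\]
which, after normalizing by mass, is compared with the function produced for $Q$. Because $f$ has its maximum exactly at $K=\|Q\|_{L^2}\|\nabla Q\|_{L^2}$ with maximum value $M(Q)E_0(Q)$, and $M(u)E(u)$ is conserved, hypothesis \eqref{me1} keeps $K(t)$ on one side of the threshold for all time. In the equality case $M(u)E(u)=M(Q)E_0(Q)$, touching the threshold at some time would make $u(t)$ a GN optimizer, which is impossible by the first step. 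This proves (iii) and the dichotomy between \eqref{me2} and \eqref{me3}.

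For (i) I would run Kenig--Merle: a linear profile decomposition adapted to $e^{-it(-\Delta+V)}$, and the construction of a minimal non-scattering critical element at sub-threshold mass-energy. Profiles that escape to spatial infinity are approximated by solutions of \eqref{NLS0}, which scatter by Duyckaerts--Holmer--Roudenko; here the threshold case requires the strict inequality, since \eqref{NLS0} does have nonscattering solutions at the threshold. That is exactly why the nonattainment result enters. The critical element is then precompact modulo nothing, because translation is broken, and it is excluded by a localized virial/Morawetz identity using $x\cdot\nabla V\leq0$. For (ii), use the localized virial of Du--Wu--Zhang type together with the coercivity $K>$ threshold; when $xu_0\in L^2$, Glassey's virial argument applies, with $-x\cdot\nabla V\geq0$ entering with a favorable sign. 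For (iv), take $u_n(0)=Q(\cdot-y_n)$ with $|y_n|\to\infty$ (slightly rescaled to sit below threshold). Stability theory shows these track $e^{it}Q(\cdot-y_n)$ on ever-longer intervals, so their $L^5$ norms blow up.

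I expect the main obstacle to be the threshold case of (i). One has to show that the profile decomposition cannot produce a profile living at exact threshold for the free equation. This requires using that $M(u)E(u)=M(Q)E_0(Q)$ splits strictly whenever $V$ sees any mass, and that escaping profiles inherit strict sub-threshold free quantities.
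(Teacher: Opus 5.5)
Your variational step is correct. You show the Gagliardo--Nirenberg constant for $(-\Delta+V)^{\frac12}$ equals the free one by testing on $Q(\cdot-y)$, $|y|\to\infty$, and that it is never attained; the trapping argument then gives (iii). The sub-threshold Kenig--Merle scheme and your construction for (iv) are also the standard ones. (The paper does not prove this theorem; it cites it from the literature.) There are, however, two genuine gaps.

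\textbf{Threshold case of (i).} Write $K(u)=\|u\|_{L^2}\|(-\Delta+V)^{\frac12}u\|_{L^2}$ and $K_Q=\|Q\|_{L^2}\|\nabla Q\|_{L^2}$. Once dichotomy is excluded by the sub-threshold theory, a non-scattering threshold solution has, along $t_n\to\infty$, a single profile carrying all the mass and energy: $u(t_n)=e^{i\theta_n}\phi(\cdot-x_n)+o_{H^1}(1)$. If $x_n$ stays bounded you are in the compact case, which your virial argument handles; this is where nonattainment really enters, giving uniform coercivity. If instead $|x_n|\to\infty$, then $\int V|\phi(\cdot-x_n)|^2\,dx\to0$, so
\[
M(\phi)E_0(\phi)=M(u)E(u)=M(Q)E_0(Q)\qtq{and}\|\phi\|_{L^2}\|\nabla\phi\|_{L^2}=\lim_{n\to\infty}K(u(t_n))\leq K_Q .
\]
An escaping profile therefore sits \emph{exactly} at the free threshold. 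The strict splitting you invoke disappears precisely because $V$ no longer sees the mass, so your claim that escaping profiles inherit strictly sub-threshold free quantities is false. The free evolution of $\phi$ may be $Q^-$, or, if $K(u(t_n))\to K_Q$, the soliton $e^{it}Q$ itself (modulo symmetries). Neither scatters, so stability gives no contradiction. Nonattainment does not help here: it only says near-optimizers must have their mass escaping to infinity, which is exactly this scenario. What is missing is a dynamical argument showing that a threshold solution with $K<K_Q$ cannot approach $\{e^{i\theta}Q_\lambda(\cdot-x):|x|\to\infty\}$. One possible route is a modulation analysis near $Q$ coupled with a localized virial estimate, in the spirit of Duyckaerts--Roudenko. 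Some such argument is precisely the extra work the threshold result requires.

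\textbf{Sign error in (ii).} The virial identity is
\[
\partial_t^2\int|x|^2|u|^2\,dx=8\|\nabla u\|_{L^2}^2-6\|u\|_{L^4}^4-4\int x\cdot\nabla V\,|u|^2\,dx .
\]
For $x\cdot\nabla V\leq0$ the last term is \emph{nonnegative}: the repulsive potential pushes mass outward and works against concentration. It is favorable only on the scattering side. Using the energy, the right-hand side equals
\[
24E(u)-4\|(-\Delta+V)^{\frac12}u\|_{L^2}^2-4\int(2V+x\cdot\nabla V)|u|^2\,dx .
\]
So Glassey's argument closes only if this last integral can be controlled. The same holds for the localized Du--Wu--Zhang version, since $-\int\nabla a\cdot\nabla V|u|^2\,dx\geq0$ for any radially nondecreasing weight $a$. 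A pointwise condition $kV+x\cdot\nabla V\geq0$ means $r^kV(r\omega)$ is nondecreasing in $r$, which is incompatible with \eqref{V} unless $V\equiv0$. So the term cannot simply be absorbed, and your sketch of (ii) needs an additional idea (or hypothesis) that it does not supply. At the threshold there is a further issue: $K(u(t))-K_Q>0$ is not a priori bounded below, so the coercivity is not uniform either.
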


The proof of Theorem~\ref{T:Hong} may be found in the works \cite{Hong, MMZ}. Items (i) and (ii) show that NLS in the presence of a nonnegative, repulsive potential admits the same mass-energy threshold for a \emph{scattering/blow-up dichotomy} as the standard NLS (cf. \cite{DHR}).  In fact, in contrast to the standard NLS, the dichotomy persists at the threshold.  In particular, just as one finds no ground state at the mass-energy of $Q$ (item (iii)), one finds no analogues of the \emph{heteroclinic orbits} $Q^\pm$ arising in the analysis of threshold solutions to \eqref{NLS0} (see e.g. \cite{DuyckaertsRoudenko}).  Nonetheless, the existence of the solutions in part (iv) suggests that $M(Q)E_0(Q)$ should be the optimal threshold for obtaining a simple classification of dynamics into blowup versus scattering.  

Our main result, Theorem~\ref{T}, confirms that there can be no simple scattering/blowup dichotomy above the mass-energy of $Q$. In particular, we have the following corollary to Theorem~\ref{T},  in which we construct approximate traveling waves at any mass-energy above that of $Q$.

\begin{corollary}\label{C} Suppose $V\in C^1(\R^3;\R)$ satisfies \eqref{V}. For any $L>M(Q)E_0(Q)$, there exists a forward-global, $H^1$-bounded, non-scattering solution $u$ to \eqref{NLS} with $M(u) E(u) = L$.
\end{corollary}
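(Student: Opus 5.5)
We deduce Corollary~\ref{C} from Theorem~\ref{T} by computing the mass and energy of the constructed solution and tuning $(\alpha,\xi)\in\mathcal{P}$ so that $M(u)E(u)=L$.

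\emph{Mass and energy.} Let $u$ be the solution from Theorem~\ref{T} for a given $(\alpha,\xi)\in\mathcal{P}$. Mass and energy are conserved for \eqref{NLS}, and $u(t)-W_0(t)\to0$ in $H^1$, so
\[
M(u)=\lim_{t\to\infty}M(W_0(t)),\qquad E(u)=\lim_{t\to\infty}E(W_0(t)).
\]
By scaling, $M(W_0)=\alpha^{-1}M(Q)$. Since $W_0$ is a Galilean boost of a rescaled ground state,
\[
E_0(W_0)=\alpha E_0(Q)+\tfrac12|\xi|^2\alpha^{-1}M(Q),
\]
where the cross term vanishes because $Q$ is real. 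The potential term satisfies
\[
\tfrac12\int V|W_0|^2\,dx\to0,
\]
because the profile $Q(\alpha[x-2t\xi])$ moves off to infinity with $|\xi|\neq0$ and $V$ decays exponentially. Hence
\[
M(u)E(u)=M(Q)E_0(Q)+\tfrac12|\xi|^2\alpha^{-2}M(Q)^2.
\]

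\emph{Choosing the parameters.} Given $L>M(Q)E_0(Q)$, we need $|\xi|/\alpha=c_L$, where
\[
c_L:=\bigl[2(L-M(Q)E_0(Q))\bigr]^{1/2}/M(Q)>0.
\]
It remains to check that the ray $\xi=c_L\alpha\,e_1$ meets $\mathcal{P}$. Fix any $\delta\in(0,\delta_0)$; membership in $\mathcal{P}_\delta$ requires three things.
\begin{itemize}
\item $\alpha>1$.
\item $\alpha>\delta|\xi|/\lambda_1$, i.e. $\alpha>\delta c_L\alpha/\lambda_1$. This holds for all $\alpha$ provided $\delta<\lambda_1/c_L$, so we shrink $\delta$ to ensure it.
\item $\alpha>C\|\nabla V\|_{L^4}/(\delta|\xi|)$, i.e. $\alpha^2>C\|\nabla V\|_{L^4}/(\delta c_L)$. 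This holds once $\alpha$ is large.
\end{itemize}
So $(\alpha,c_L\alpha e_1)\in\mathcal{P}_\delta$ for $\alpha$ large.

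\emph{Properties of $u$.} The resulting $u$ is forward-global by construction: it is defined on $[T_0,\infty)$, and we take $T_0$ as the initial time, or translate in time. It is $H^1$-bounded on $[T_0,\infty)$, since $\|W_0(t)\|_{H^1}$ is constant in $t$ and $u-W_0\to0$. It does not scatter: if $u(t)-e^{it\Delta}u_+\to0$ in $H^1$, then $\|u(t)\|_{L^4}\to0$ by dispersion of the free flow (density of Schwartz functions plus the $L^{4/3}\to L^4$ decay estimate). But $\|u(t)\|_{L^4}\to\|W_0(t)\|_{L^4}=\alpha^{1/4}\|Q\|_{L^4}>0$, a contradiction.

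\emph{Main obstacle.} The delicate step is the compatibility check in the second bullet: the constraint $\alpha>\delta|\xi|/\lambda_1$ caps the ratio $|\xi|/\alpha$. This is handled by letting $\delta$ depend on $L$. Everything else is routine.
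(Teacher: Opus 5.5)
Your proposal is correct and follows essentially the same route as the paper. You compute $M(u)E(u)=M(Q)E_0(Q)+\tfrac12\alpha^{-2}|\xi|^2M(Q)^2$ via conservation and the $H^1$ convergence to $W_0$, rule out scattering by a Lebesgue-norm decay argument, and pick $(\alpha,\xi)\in\mathcal{P}_\delta$ with $\delta$ depending on the target ratio $|\xi|/\alpha$. The differences are cosmetic: you use $L^4$ rather than $L^6$ for non-scattering, and you fix $\delta<\min\{\delta_0,\lambda_1/c_L\}$ first and then take $\alpha$ large along the ray $\xi=c_L\alpha e_1$, whereas the paper takes $|\xi|$ large first and then picks $\delta$ in an admissible window.
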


\begin{proof} Suppose $u:[T_0,\infty)\times\R^3\to\C$ is the solution given by Theorem~\ref{T} for some $(\alpha,\xi)\in\mathcal{P}$, and set
\[
W_0(t,x) = \alpha e^{i[\alpha^2 t + x\cdot\xi - |\xi|^2 t]}Q(\alpha[x-2t\xi]).
\]
We note that $u$ is forward-global and bounded in $H^1$, but claim that $u$ does not scatter in $H^1$. Indeed, if $u$ scatters forward in time then one can readily obtain that $\|u(t)\|_{L^6}\to 0$ as $t\to\infty$, which is incompatible with the convergence in \eqref{H1C}.

Using conservation of mass and energy and \eqref{H1C}, we now observe that
\begin{align*}
M(u) E(u)  &= \lim_{t\to\infty} M(W_0(t)) E(W_0(t)) \\
& = \lim_{t\to\infty} M(W_0(t)) E_0(W_0(t)) \\
& = M(Q) E_0(Q) + \tfrac12\alpha^{-2}  |\xi|^2 [M(Q)]^2.
\end{align*}
Thus the proof of the corollary reduces to proving that 
\[
\forall \beta>0\ \exists (\alpha,\xi)\in\mathcal{P}: \alpha^{-2}|\xi|^2 = \beta. 
\]
To this end, let $\beta>0$. We take $\xi\in\R^3\backslash\{0\}$ satisfying
\[
|\xi|> \max\biggl\{ \biggl(\frac{\beta^{\frac12} C\|\nabla V\|_{L^4}}{\delta_0}\biggr)^{\frac12}, \biggl(\frac{\beta C\|\nabla V\|_{L^4}}{\lambda_1}\biggr)^{\frac12}, \beta^{\frac12}\biggr\},
\]
and define $\alpha := |\xi| \beta^{-\frac12}$. We need to show that $\exists \delta\in(0,\delta_0)$ such that $(\alpha,\xi)\in \mathcal{P}_\delta$.  

To this end, we select $\delta>0$ such that
\[
\frac{\beta^{\frac12}C\|\nabla V\|_{L^4}}{|\xi|^2}<\delta<\min\biggl\{\frac{\lambda_1}{\beta^{\frac12}},\delta_0\biggr\},
\]
which is possible due to the first two constraints on $|\xi|$.  Rearranging the lower bound on $\delta$ and using $\alpha=|\xi| \beta^{-\frac12}$ yields the first constraint in the definition of $\mathcal{P}_\delta$, while rearranging the upper bound on $\delta$ yields the second constraint.  Finally, the third constraint on $\xi$ guarantees $\alpha> 1$.  Thus we conclude that $(\alpha,\xi)\in\mathcal{P}_\delta$, which completes the proof of the corollary.
\end{proof}

We note that Landoulsi \cite{Landoulsi} has previously considered the analogue of Theorem~\ref{T} and Corollary~\ref{C} in the setting of the focusing cubic NLS in the presence of a convex obstacle (see also \cite{DLR, KVZ}).  Indeed, it was the work of Landoulsi that originally motivated us to consider the questions addressed here. We will discuss the work \cite{Landoulsi} in more detail after discussing the outline of the proof of Theorem~\ref{T}.

We would also like to mention the work of Cuccagna and Maeda \cite{CM1, CM2}, who carried out a similar construction in the setting of generalized mass-subcritical NLS.  In the mass-subcritical setting the underlying ground state is stable, and the primary challenges in \cite{CM1,CM2} lie in controlling internal modes and radiation damping via the nonlinear Fermi golden rule. In contrast, in our setting the ground state is unstable, and the key issue in our analysis is to control the unstable direction.  For this, we use a topological shooting argument in the style of \cite{CMM}.\footnote{We would also like to refer the reader to the work \cite{Schlag} on the construction of a stable manifold for \eqref{NLS0}, which addresses the issue of the unstable direction through a sophisticated contraction mapping approach.} We further manage the soliton-potential interaction through the explicit parameter constraints appearing in \eqref{Pdelta}.

\smallskip

\emph{Outline of the proof.} 

\smallskip

The proof is based on the strategy introduced in \cite{CMM} (which shares some connection with previous works such as \cite{Weinstein, MartelMerleTsai} and has been further adapted in works such as \cite{Landoulsi}). The solution in Theorem~\ref{T} is ultimately constructed as the limit of solutions $u_n$ that exhibit the correct asymptotic behavior on increasingly long intervals $[T_0,T_n]$.  Thus the essential step is the construction of a solution $u$ that exhibits the correct behavior on an interval $[T_0,T^*]$, where $T_0$ is large but fixed and $T^*$ may be arbitrarily large. 

Given parameters $(\alpha,\xi)\in\mathcal{P}_\delta$ and $T^*\gg 1$, we wish to prescribe data at $t=T^*$ to obtain a solution that is close to the traveling wave \eqref{W1} on $[T_0,T^*]$.  In fact, we will construct a solution that is close to a slightly \emph{modulated} traveling wave
\begin{equation}\label{W2}
W(t,x) =  e^{i[\alpha^2 t + x\cdot \xi - |\xi|^2 t + \gamma(t)]}Q_\alpha(x-2t\xi - y(t)),\quad Q_\alpha(x):=\alpha Q(\alpha x),
\end{equation}
where $(\gamma(t),y(t))\in\R^{1+3}$. Thus we look for a solution of the form
\begin{equation}\label{U1}
u(t,x)=e^{i[\alpha^2 t + x\cdot \xi - |\xi|^2 t + \gamma(t)]}\{Q_\alpha(x-2t\xi-y(t)) + z(t,x-2t\xi-y(t)\},
\end{equation}
and we need to control the perturbation $z(t)$ as well as the modulation parameters $(\gamma(t),y(t))$.

The heart of the argument is a bootstrap based on the Lyapunov functional
\begin{equation}\label{Lyapunov0}
H(u) := E_0(u) + \tfrac12(\alpha^2+|\xi|^2) M(u) - \tfrac12\xi\cdot P(u).
\end{equation}
Using the fact that $e^{ix\cdot\xi} Q_\alpha$ are critical points for $H$, one can compute that 
\begin{equation}\label{Lyapunov1}
H(u) - H(W_0) =\tfrac12\langle L_\alpha^+ z_1,z_1\rangle + \tfrac12 \langle L_\alpha^- z_2,z_2\rangle + \mathcal{O}(\|z\|_{H^1}^3),
\end{equation}
where $z=z_1+iz_2$ and $L_\alpha^\pm$ are the linearized operators defined in \eqref{Ls} below.  By prescribing data appropriately, we can arrange that $H(u)-H(W_0)$ is small at $t=T^*$. Thus, \emph{if} we had 
\begin{itemize}
\item[(i)] coercivity for the operators $L_\alpha^\pm$, and 
\item[(ii)] conservation of $H(u)-H(W_0)$, 
\end{itemize} 
then \eqref{Lyapunov1} would yield control over $z$ for all time by a continuity argument. 

As for item (i), the obstacles to coercivity for $L_\alpha^\pm$ are well-known (see e.g. Corollary~\ref{C:coercivity} below).  First, one must avoid the kernels of $L_\alpha^\pm$ (i.e. the span of $Q_\alpha$ and its spatial derivatives). This is precisely the role of the modulation parameters $(\gamma(t),y(t))$, whose variation can also be controlled as long as $z(t)$ remains small.  Second, one must control the components of $z(t)$ in the directions $Y^\pm_\alpha:=\alpha Y_1^\pm(\alpha\cdot)$. We return to this point below.

For item (ii), we first observe that since $W$ solves \eqref{NLS0} and $H(\cdot)$ is invariant under modulation, we have $H(W_0(t))\equiv H(W(t))\equiv H(W_0(T^*))$. However, because of the external potential, $H(u(t))$ is \emph{not} exactly conserved. Nonetheless, if $u$ has the form \eqref{U1} with $z$ sufficiently small and the soliton-potential interaction is sufficiently weak, there is hope that the continuity argument can still survive.

In light of the discussion above (and using Corollary~\ref{C:coercivity}), we may rewrite \eqref{Lyapunov1} in the form\footnote{We will be more careful about implicit constants below.}
\begin{align}
\|z(t)\|_{H^1}^2 & \lesssim |H(u(T^*))-H(W_0(T^*))|  \label{Lyapunov21} \\
& \quad + |\langle z(t), iQ_\alpha\rangle|^2 + \sum_{j=1}^3 |\langle z(t), \partial_{x_j} Q_\alpha\rangle|^2 \label{Lyapunov22} \\
& \quad + \int_t^{T*}\bigl| \tfrac{d}{ds} H(u(s))\bigr| \,ds + \sum_{\sigma\in\{\pm\}} |\langle iz(t),Y_\alpha^\sigma\rangle|^2 + \mathcal{O}(\|z(t)\|_{H^1}^3).  \label{Lyapunov23}
\end{align}

Now, the term \eqref{Lyapunov21} will be small by construction, while the terms in \eqref{Lyapunov22} will vanish identically by the choice of modulation parameters.  By direct calculation and using the decomposition \eqref{U1}, one finds that the integral term in \eqref{Lyapunov23} arising from non-conservation of $H$ is controlled schematically by
\begin{equation}\label{Lyapunov3}
\int_t^{T^*} \|\nabla V\|_{L^4} \|z(s)\|_{H^1}^2 + \|\nabla V(\cdot) \Psi(\cdot-2s\xi-y(s))\|_{L^2}\|z(s)\|_{H^1} \,ds,
\end{equation}
where $\Psi\in\{Q_\alpha,\nabla Q_\alpha\}$.  Because of the exponential decay of $Q$ and the decay assumption \eqref{V} on $V$, the term in \eqref{Lyapunov3} involving the soliton-potential interaction decays exponentially and can be readily incorporated into the bootstrap. The remaining integral term in \eqref{Lyapunov3} can also be incorporated into the bootstrap via Gronwall's inequality, provided the order of decay for $z$ is sufficiently large compared to the constant $\|\nabla V\|_{L^4}$. In fact, this is precisely what is guaranteed by first constraint on the traveling wave parameters appearing in \eqref{Pdelta}. 

It follows that the only possible obstruction to constructing our desired solution arises from the eigenfunction components
\[
A_\alpha^\pm(t):= \langle iz(t),Y_\alpha^\pm\rangle.
\]
One typically regards $Y_\alpha^+$ as the unstable direction and $Y_\alpha^-$ as the stable direction due to the signs of their corresponding eigenvalues; however, as we construct our solution backward in time, the roles are exactly reversed here. Using the equation satisfied by $z$ and the eigenvalue property, one can compute that
\begin{equation}\label{Adot}
\dot A_\alpha^\pm = \pm\alpha^2 \lambda_1 A_\alpha^\pm+ \mathcal{E}_\alpha^\pm,
\end{equation}
where $\mathcal{E}_\alpha^\pm$ is comprised of nonlinear contributions from $z$, modulation terms, and eigenfunction-potential interaction terms. In particular, under appropriate bootstrap assumptions, $\mathcal{E}_\alpha^\pm$ can be treated perturbatively.  Thus controlling $A_\alpha^+$ (backward in time) is straightforward, and we find that our construction can only break down due to the possible growth of the component $A_\alpha^-$ backward in time.

The final piece of the argument therefore consists of proving that we can prescribe data at time $t=T^*$ such that the component $A_\alpha^-$ remains under control on the entire interval $[T_0,T^*]$. This is done by a topological argument, as we now explain.

Writing 
\[
I^*=[-e^{-\frac12\delta\alpha|\xi|T^*},e^{-\frac12\delta\alpha|\xi|T^*}],
\]
the problem essentially boils down to proving that there exists $a\in I^*$ such that the solution we construct with $A_\alpha^-(T^*)=a$ satisfies
\begin{equation}\label{Abd}
\mathcal{N}(t;a) := |e^{\frac12\delta\alpha|\xi| t} A_\alpha^-(t)|^2\leq 1
\end{equation}
on the interval $[T_0,T^*]$.  Indeed, the continuity argument sketched above implies that if \eqref{Abd} holds on some interval of the form $[T_1, T^*]$, then our solution has the desired properties on this interval. We proceed by contradiction and assume that the construction fails before we reach $t=T_0$ for every choice of $a\in I^*$. In particular, after prescribing ${A}_\alpha^-(T^*)=a\in I^*$, we extend the solution backwards until some time  $\tau(a)\in(T_0,T^*]$ at which $\mathcal{N}(\tau(a),a)=1$.

It is at this point that we impose the second constraint appearing in \eqref{Pdelta}, which guarantees $-\alpha^2\lambda_1 + \tfrac12\alpha\delta|\xi|<0$.  Using $\eqref{Adot}$, this ultimately allows us to derive the following fact: \emph{$\mathcal{N}$ must be strictly decreasing at any time at which $\mathcal{N}$ takes the value $1$.}   This property allows us to prove that the map $a\mapsto \tau(a)$ is continuous via the Implicit Function Theorem, as well as the fact that $\tau(a) = T^*$ for $a\in \partial I^*$.

 We therefore find that the map $a\mapsto A_\alpha^-(\tau(a))$ is also continuous, and moreover it fixes the endpoints of $I^*$.  Thus by the Intermediate Value Theorem, there must exist $a^*\in I^*$ such that $A_\alpha^-(\tau(a^*))=0$.  In particular,  $\mathcal{N}(\tau(a^*),a^*)=0$, which contradicts that $\mathcal{N}(\tau(a),a)=1$ for all $a\in I^*$.

In conclusion, by prescribing $A_\alpha^-(T^*)$ properly and using the continuity argument described above, we can construct a solution that remains exponentially close to the modulated traveling wave on the entire interval $[T_0,T^*]$.  As the modulation parameters themselves are exponentially small, this yields a solution with the desired properties on $[T_0,T^*]$. To complete the proof of Theorem~\ref{T}, one must apply this construction on a sequence of final times $T_n\nearrow \infty$ and argue that the corresponding solutions $u_n$ converge to a solution that exhibits the correct asymptotic behavior as $t\to\infty$. 

\smallskip

\emph{Related work and followup questions.}  

\smallskip

As mentioned above, our construction parallels the one carried out by Landoulsi in the setting of the focusing NLS outside of a convex obstacle \cite{Landoulsi}.  In particular, \cite{Landoulsi} also adapts the techniques introduced in \cite{CMM}, relying on a bootstrap argument based on an appropriate Lyapunov functional.  In \cite{Landoulsi}, the analogue of the functional \eqref{Lyapunov0} uses the full conserved energy for the obstacle model (rather than the energy for \eqref{NLS0}), but still includes the (non-conserved) momentum.  However, we were unable to find any discussion in \cite{Landoulsi} addressing the failure of the conservation of momentum (and therefore the non-conservation of $H(u)$).  In particular, in the setting of \cite{Landoulsi} one should encounter integral terms over the boundary of the obstacle, and it should be necessary to account for terms analogous to those appearing in \eqref{Lyapunov3}.  We note that the treatment of \eqref{Lyapunov3} is a key new ingredient in this work, and controlling this term required that we restrict the traveling wave parameters, essentially to guarantee that the soliton-potential interaction is sufficiently weak compared to the size of the potential. 

We believe that the assumptions on $V$ in \eqref{V} could be relaxed considerably.  For example, while repulsivity and nonnegativity are natural for studying long-time dynamics of \eqref{NLS} (cf. Theorem~\ref{T:Hong}), they do not seem to be essential at all for our main construction.  All that is ultimately needed for the construction is a reasonable well-posedness theory for \eqref{NLS} and sufficient decay for the potential $V$.  The decay assumptions we impose in \eqref{V} are convenient but certainly not optimal. We also note that one could also allow the potential to be time-dependent, as long as the appropriate spatial decay assumptions hold uniformly in time.

However, one particularly interesting case that seems to be beyond the scope of the techniques presented here is that of the repulsive inverse-square potential, i.e. $V(x)=a|x|^{-2}$ with $a>0$.  For this model, the analogue of Theorem~\ref{T:Hong} holds (see \cite{MMZ, KMVZ}), but analogues of Theorem~\ref{T}  and Corollary~\ref{C} have not been established to date. We believe that this should be an interesting model for future study.

It is also natural to ask about the behavior of the solution constructed in Theorem~\ref{T} for times $t<T_0$. For a localized, nontrapping potential as in \eqref{V}, it seems that several scenarios could occur, including blowup, scattering as $t\to-\infty$, or convergence to another traveling wave as $t\to-\infty$.  We plan to consider this question in future work. 

\smallskip

\emph{Outline of the paper.}

\begin{itemize}
\item In Section~\ref{S:notation} we introduce notation and collect some preliminary results.  This includes a discussion of the linearized operator for \eqref{NLS0} around the ground state. 
 \item In Section~\ref{S:modulation} we prove that we can construct a local-in-time solution with prescribed components $A_\alpha^\pm$ in the directions $Y_\alpha^\pm$ at the final time, and that this solution admits a decomposition into a modulated traveling wave plus a remainder term satisfying certain orthogonality conditions (see Proposition~\ref{P:modulation}). 
\item In Section~\ref{S:bootstrap} we prove that the solution constructed in Proposition~\ref{P:modulation} satisfies improved estimates backward in time as long as the component $A_\alpha^-$ remains under control (see Proposition~\ref{P:bootstrap}). 
\item In Section~\ref{S:construction} we prove that we can prescribe the component $A_\alpha^-$ at the final time such that this component remains under control all the way back to the fixed time $T_0$, thus (by Proposition~\ref{P:bootstrap}) allowing us to construct a solution with the desired properties on arbitrarily long intervals (see Proposition~\ref{P:construction}).  
 \item Finally, in Section~\ref{S:proof} we prove the main result, Theorem~\ref{T}, by applying Proposition~\ref{P:construction} to a sequence of final times $T_n\to\infty$ and passing to the limit.
 \end{itemize}

\subsection*{Acknowledgements} L.~B. and J.~M. were supported in part by NSF grant DMS-2350225. J.~M. was additionally supported by Simons Foundation grant MPS-TSM-00006622. Part of this work was completed while X.~L. was visiting the University of Oregon as a Visiting Scholar under the support of NSFC grants No. 12371240 and No. 12431008.

\section{Notation and preliminaries}\label{S:notation}

We write $A\lesssim B$ to denote $A\leq CB$ for some $C>0$.  We will also occasionally use the `big-oh' notation $\mathcal{O}$.  We use the standard notation for Lebesgue norms, Sobolev norms, and mixed space-time norms. 

We use the following inner product throughout this paper: 
\[
\langle f,g\rangle := \Re \int f(x)\bar g(x)\,dx. 
\]

We record the following standard subcritical well-posedness result for \eqref{NLS} with potentials satisfying \eqref{V} (see e.g. \cite{Cazenave}).

\begin{proposition}\label{P:LWP} Suppose $V\in C^1(\R^3;\R)$ satisfies \eqref{V}, and fix $s\in(\frac12,1]$.

For any $t_0\in\R$ and $u_0\in H^s(\R^3)$, there exists an open interval $I\ni t_0$ and a unique solution $u\in C_t(I;H_x^s(\R^3))$ to \eqref{NLS} with $u(t_0)=u_0$.  Furthermore, the lifespan of the solution may be extended as long as $u(t)$ remains bounded in $H^s$.

Moreover, if $u_n:[T_1,T_2]\times\R^3\to\C$ are $H^s$-bounded solutions to \eqref{NLS} such that $u_n(t_0)\to u_0$ in $H^s$ for some $t_0\in[T_1,T_2]$ and $u$ is the solution to \eqref{NLS} with $u(t_0)=u_0$, then the maximal lifespan of $u$ contains $[T_1,T_2]$ and $u_n(t)\to u(t)$ strongly in $H^s$ for all $t\in [T_1,T_2]$. 
\end{proposition}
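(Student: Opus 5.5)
The plan is the standard Duhamel/contraction argument in Strichartz spaces, treating $Vu$ as a bounded perturbation. For $s\in(\frac12,1]$, the cubic nonlinearity in $3d$ is $H^s$-subcritical ($s_c=\frac12$). I would write the solution as
\[
u(t)=e^{i(t-t_0)\Delta}u_0 - i\int_{t_0}^t e^{i(t-\tau)\Delta}\bigl[-Vu+|u|^2u\bigr](\tau)\,d\tau,
\]
using the free propagator rather than $e^{-it(-\Delta+V)}$. This avoids needing Strichartz estimates for $-\Delta+V$. By \eqref{V}, $V$ and $\nabla V$ are bounded, so $\|Vu\|_{H^s}\lesssim \|u\|_{H^s}$ (interpolate between $s=0$ and $s=1$). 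The potential term is therefore controlled on an interval $I$ of length $|I|$ by $|I|\,\|u\|_{L^\infty_tH^s}$ through the $L^1_tH^s$ dual Strichartz estimate.

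\emph{Step 1: existence and uniqueness.} I would work in
\[
X=C_t(I;H^s)\cap L^q_tW^{s,r}_x
\]
for an admissible pair with $r$ slightly above $3$. The fractional Leibniz rule and Sobolev embedding give
\[
\||u|^2u\|_{L^{\tilde q'}_t W^{s,\tilde r'}} \lesssim |I|^{\theta}\|u\|_X^3 ,\qquad \theta=\theta(s)>0,
\]
where the positive power $|I|^\theta$ comes from subcriticality via Hölder in time. A contraction on a ball of radius $2C\|u_0\|_{H^s}$ then holds on an interval whose length depends only on $\|u_0\|_{H^s}$. Uniqueness in $C_tH^s$ follows from the same difference estimates, since subcriticality lets the Strichartz norm be recovered from $C_tH^s$ on short intervals. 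Because the existence time depends only on the $H^s$ norm, the blowup criterion follows: if $\|u(t)\|_{H^s}$ stays bounded, iterating extends the lifespan.

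\emph{Step 2: stability.} Let $M=\sup_n\|u_n\|_{L^\infty_t H^s([T_1,T_2])}$. Subdivide $[T_1,T_2]$ into intervals of length $\eta=\eta(M)$, chosen so that the contraction estimates hold uniformly for all data of size $\le 2M$. On the interval containing $t_0$, the difference estimate gives
\[
\|u_n-u\|_{X}\lesssim \|u_n(t_0)-u_0\|_{H^s}.
\]
In particular $u$ exists on this interval and $\|u(t)\|_{H^s}\le \limsup_n\|u_n(t)\|_{H^s}\le M$. At the endpoint, $u_n\to u$ in $H^s$, so the argument repeats on the next subinterval with the same $\eta$. Finitely many steps cover $[T_1,T_2]$ in both time directions.

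\emph{Main obstacle.} The key point is that the step size $\eta$ must depend only on the uniform bound $M$, and not on $u$ itself. This is exactly where the a priori $H^s$ bound on the $u_n$, transferred to $u$ by strong convergence, is needed. The remaining work consists of routine fractional product estimates; see \cite{Cazenave}.
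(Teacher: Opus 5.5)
Your argument is correct as a proof of standard local well-posedness, but it takes a different route from the paper's sketch. The paper runs the contraction on the Duhamel formula for the perturbed propagator $e^{-it(-\Delta+V)}$. It imports Strichartz estimates for $-\Delta+V$ from the literature, and this is where the decay and nonnegativity in \eqref{V} enter. You instead keep the free propagator and put $Vu$ into the forcing. You pay $|I|\,\|V\|_{W^{1,\infty}}\|u\|_{L^\infty_tH^s}$ through the $L^1_tH^s$ dual estimate, using $\|Vu\|_{H^s}\lesssim\|u\|_{H^s}$ by interpolation.

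Your route is more elementary. It uses only $V,\nabla V\in L^\infty$ and needs no spectral information. It also sidesteps a step implicit in the paper's approach at the $W^{s,r}$ level: passing fractional derivatives through $e^{-it(-\Delta+V)}$, i.e. comparing $W^{s,r}$ norms with norms adapted to $-\Delta+V$. The cost is that the step size also depends on $\|V\|_{W^{1,\infty}}$, which is harmless here. The perturbed estimates hold globally in time, which is what one wants for the scattering theory in the introduction, but nothing in this proposition needs that. Your stability argument is exactly right: the step size is fixed by the uniform bound $M$, and that bound passes to $u$ by strong convergence.

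One claim should be scaled back: uniqueness in all of $C_tH^s$. That is an \emph{unconditional} uniqueness statement, and it does not follow from your difference estimates when $s$ is near $\frac12$. From $u\in L^\infty_tH^s\subset L^\infty_tL^{6/(3-2s)}$ alone, you only get $|u|^2u\in L^\infty_tL^{2/(3-2s)}$. Since $\frac{2}{3-2s}<\frac65$ when $s<\frac23$, this is not a dual Strichartz space (and at regularity $s$ it is worse). So the Strichartz norm cannot simply be recovered from $C_tH^s$.

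The naive argument does close for $s\geq\frac23$, by estimating the difference in $L^\infty_tL^2\cap L^2_tL^6$. For $s\in(\frac12,\frac23)$, however, the question is not settled by the contraction estimates. What the contraction gives, for you and for the paper's sketch alike, is uniqueness in $C_tH^s\cap L^q_tW^{s,r}$. That is all the paper uses, since every solution it considers, including the $u_n$ in the stability statement, is produced by the local theory.
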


The proof of Proposition~\ref{P:LWP} relies on the standard arguments, namely, a contraction mapping argument based on the Duhamel formula for \eqref{NLS} and Strichartz estimates for $e^{-it(\Delta+V)}$.  We remark that the usual Strichartz estimates can readily be obtained under the assumptions \eqref{V} on the potential (see e.g. \cite{KT, GoldbergSchlag}).

We next record a simple technical lemma that will be used below to control soliton-potential and eigenfunction-potential interaction terms.

\begin{lemma}\label{L:WI} Suppose $\Psi_1,\Psi_2$ satisfy
\[
|\Psi_1(x)|+|\Psi_2(x)| \lesssim_c e^{-c|x|}.
\]
Then
\[
\|\Psi_1(\cdot)\Psi_2(\cdot-\eta)\|_{L^2} \lesssim_c e^{-\frac12 c|\eta|}. 
\]
\end{lemma}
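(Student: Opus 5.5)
We bound the $L^2$ norm pointwise by splitting $\R^3$ according to which factor is guaranteed to be small. The key elementary fact is the triangle inequality $|x|+|x-\eta|\geq|\eta|$, so for every $x$ at least one of $|x|\geq\frac12|\eta|$ or $|x-\eta|\geq\frac12|\eta|$ holds. The plan is to use this to extract the factor $e^{-\frac12 c|\eta|}$ while retaining enough decay from the other factor to keep the integral finite.

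Concretely, the hypothesis gives
\[
|\Psi_1(x)\Psi_2(x-\eta)|^2 \lesssim_c e^{-2c|x|}e^{-2c|x-\eta|}.
\]
We write $e^{-2c|x|-2c|x-\eta|} = e^{-c(|x|+|x-\eta|)}\cdot e^{-c(|x|+|x-\eta|)}$. For the first factor we use $|x|+|x-\eta|\geq|\eta|$ to get $e^{-c|\eta|}$. For the second we discard $e^{-c|x-\eta|}\leq 1$ and keep $e^{-c|x|}$, which is integrable on $\R^3$ with $\int e^{-c|x|}\,dx\lesssim c^{-3}$. This yields
\[
\|\Psi_1(\cdot)\Psi_2(\cdot-\eta)\|_{L^2}^2 \lesssim_c e^{-c|\eta|}\int_{\R^3}e^{-c|x|}\,dx\lesssim_c e^{-c|\eta|},
\]
and taking square roots gives the claimed bound $e^{-\frac12 c|\eta|}$.

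There is no real obstacle. The only point requiring care is to split the exponent evenly, so that one half produces the $|\eta|$-decay via the triangle inequality and the other half preserves integrability in $x$. A region decomposition is unnecessary, since the triangle inequality applies uniformly in $x$. The resulting implicit constant depends only on $c$ and the implicit constants in the hypothesis, as the statement requires.
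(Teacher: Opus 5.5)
Your argument is correct. It rests on the same fact as the paper's proof, $|x|+|x-\eta|\geq|\eta|$, but uses it differently.

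The paper splits $\R^3$ into $\{|x|\leq\frac12|\eta|\}$ and $\{|x|>\frac12|\eta|\}$. On the first region $|x-\eta|\geq\frac12|\eta|$, so $|\Psi_2(x-\eta)|^2\lesssim e^{-c|\eta|}$, and one integrates $|\Psi_1|^2$. On the second region $|\Psi_1(x)|^2\lesssim e^{-c|\eta|}$, and one integrates $|\Psi_2(\cdot-\eta)|^2$. You instead split the exponent pointwise, which avoids any case analysis.

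The paper's version is a little more economical in what it uses. On each region it needs only the $L^2$ norm of the companion factor, and it needs the pointwise bound on a factor only where that factor is evaluated at distance $\geq\frac12|\eta|$ from its center. But because it discards the companion factor's decay, a cut at radius $\theta|\eta|$ gives only the rate $e^{-\min(\theta,1-\theta)c|\eta|}$, which is never better than $e^{-\frac12 c|\eta|}$.

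Your version uses both decays at once and is more flexible. Splitting $2c(|x|+|x-\eta|)$ into a $2c\theta$ part, bounded below using $|\eta|$, and a $2c(1-\theta)$ part, bounded below using $|x|$, for any $\theta\in(0,1)$ gives $\|\Psi_1(\cdot)\Psi_2(\cdot-\eta)\|_{L^2}\lesssim_{c,\theta}e^{-c\theta|\eta|}$. So the factor $\frac12$ in the lemma is not sharp, though it is all the paper needs.
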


\begin{proof} First, noting that $|x-\eta|\geq \tfrac12|\eta|$ if $|x|\leq \tfrac12|\eta|$, we estimate
\[
\int_{|x|\leq \frac12|\eta|} |\Psi_1(x)|^2 |\Psi_2(x-\eta)|^2\,dx  \lesssim_c e^{-c|\eta|} \int |\Psi_1(x)|^2\,dx \lesssim_c e^{-c|\eta|}.
\]
On the other hand,
\[
\int_{|x|>\frac12|\eta|}|\Psi_1(x)|^2 |\Psi_2(x-\eta)|^2\,dx \lesssim_c e^{-c|\eta|} \int |\Psi_2(x)|^2\,dx \lesssim_c e^{-c|\eta|},
\]
and thus we obtain the result. \end{proof}

We now introduce the operators $L_\alpha^\pm$ arising from the linearization of \eqref{NLS0} around the rescaled ground state $Q_\alpha(x):=\alpha Q(\alpha x)$: 
\begin{equation}\label{Ls}
L_\alpha^+ = \alpha^2 - \Delta - 3Q_\alpha^2,  \quad
L_\alpha^- =  \alpha^2 - \Delta - Q_\alpha^2.
\end{equation}

Writing $f=f_1+if_2$, we further introduce the operator $\mathcal{L}_\alpha$ as follows: 
\[
\mathcal{L}_\alpha f = i[L_\alpha^+ f_1 +i L_\alpha^- f_2] = i[-\Delta f + \alpha^2 f - 2 Q_\alpha^2 f - Q_\alpha^2 \bar f].
\]

Writing $D_\alpha$ for the scaling $[D_\alpha f](x) = f(\alpha x)$, we may write 
\begin{equation}\label{rescaling}
L_\alpha^\pm = \alpha^2 D_\alpha L_1^\pm D_\alpha^{-1}\qtq{and} \mathcal{L}_\alpha = \alpha^2 D_\alpha \mathcal{L}_1 D_\alpha^{-1}.
\end{equation}

We now recall some well-known properties of the operators $L_1^\pm$ and $\mathcal{L}_1$.  By rescaling, we will then derive the corresponding properties for $L_\alpha^\pm$ and $\mathcal{L}_\alpha$. 

First, we have the following (see e.g. \cite{Grillakis, WeinsteinM, DuyckaertsRoudenko, Landoulsi}).

\begin{lemma}\label{L:spectrum} The kernel of $L_1^+$ equals the span of $\{\partial_{x_j} Q\}_{j=1}^3$.  The kernel of $L_1^-$ equals the span of $Q$. 

The discrete spectrum of $\mathcal{L}_1$ is of the form $\{0,\pm\lambda_1\}$ for some $\lambda_1>0$.  The eigenvalues $\pm\lambda_1$ are simple, with $L^2$-normalized eigenfunctions $Y_1^\pm$ that decay exponentially (along with their derivatives). 

For $f=f_1+if_2\in H^1$, we have the following:
\begin{align*}
\|f\|_{H^1}^2 &\lesssim \langle L_1^+ f_1, f_1\rangle + \langle L_1^- f_2, f_2\rangle \\
& \quad + \sum_{j=1}^3 |\langle f,\partial_{x_j} Q\rangle|^2 + |\langle f, iQ\rangle|^2 + \sum_{\sigma\in\{\pm\}} |\langle if,Y_1^\sigma\rangle|^2.
\end{align*}
\end{lemma}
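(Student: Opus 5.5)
The lemma consists of classical facts, and I would prove it by collecting the standard arguments of \cite{WeinsteinM, Grillakis, DuyckaertsRoudenko}, in the order in which the statement lists them.

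\textbf{Kernels.} For $L_1^-$, note that $L_1^-Q=0$ is just \eqref{Qeq}. Since $Q>0$, $Q$ is the ground state of the Schr\"odinger operator $L_1^-$, and by Perron--Frobenius this eigenvalue is simple and lowest. Hence $L_1^-\geq 0$ with $\ker L_1^-=\operatorname{span}\{Q\}$. For $L_1^+$, differentiate \eqref{Qeq} in $x_j$ to get $\partial_{x_j}Q\in\ker L_1^+$. The reverse inclusion is the nondegeneracy of $Q$ (Weinstein, relying on Kwong's uniqueness): I would decompose into spherical harmonics and treat the $\ell=0$ sector by an ODE uniqueness argument. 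The $\ell=1$ sector is spanned by $Q'(r)$, which is the positive ground state of the corresponding radial operator. For $\ell\geq2$, the angular term makes the radial operator strictly larger than the $\ell=1$ one, so there is no kernel there. I expect this nondegeneracy step to be the deepest input, and I would simply cite it.

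\textbf{Spectrum of $\mathcal L_1$.} By Weyl's theorem, the essential spectrum of $\mathcal L_1$ is $\{i\tau:|\tau|\geq1\}$. Complex conjugation symmetry shows that if $\lambda$ is an eigenvalue then so is $-\lambda$ (together with $\pm\bar\lambda$). Eigenvalues satisfy $\mathcal L_1 f=\lambda f$, i.e. $L_1^-f_2=\lambda f_1$ and $-L_1^+f_1=\lambda f_2$, so $L_1^-L_1^+f_1=-\lambda^2f_1$.
\begin{itemize}
\item Using $L_1^-\geq0$, one reduces to the self-adjoint operator $(L_1^-)^{1/2}L_1^+(L_1^-)^{1/2}$ on $Q^\perp$. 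This shows that $\lambda^2$ is real, so the nonzero discrete eigenvalues are real or purely imaginary.
\item From $\langle L_1^+Q,Q\rangle=-2\int Q^4<0$ and the variational characterization of $Q$ (Gagliardo--Nirenberg optimizer), $L_1^+$ has exactly one negative eigenvalue.
\item In the mass-supercritical regime, one finds a test function in $Q^\perp$ making the quadratic form of the reduced operator negative (for instance built from $\Lambda Q=Q+x\cdot\nabla Q$, using $\langle Q,\Lambda Q\rangle\neq 0$ in $3d$ cubic). Since the Morse index is one, this produces exactly one negative eigenvalue $-\lambda_1^2$, hence the eigenvalues $\pm\lambda_1$.
\end{itemize}
Absence of further purely imaginary eigenvalues in the gap is part of the cited spectral analysis \cite{DuyckaertsRoudenko}. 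Exponential decay of $Y_1^\pm$ and their derivatives follows from Agmon estimates or the resolvent kernel, since $\pm\lambda_1$ lies outside the essential spectrum.

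\textbf{Coercivity.} Let $\Phi(f)=\langle L_1^+f_1,f_1\rangle+\langle L_1^-f_2,f_2\rangle$ with associated bilinear form $B$. A direct computation gives $B(f,g)=\langle i\mathcal L_1 g,f\rangle$ up to sign. Hence
\[
B(f,Y_1^\pm)=\pm\lambda_1\langle iY_1^\pm,f\rangle,
\]
so that $\langle if,Y_1^\pm\rangle=0$ means exactly that $f$ is $B$-orthogonal to $Y_1^\pm$. Moreover $\Phi(Y_1^\pm)=0$, while $B(Y_1^+,Y_1^-)\neq0$ (otherwise $Y_1^+$ would be $B$-orthogonal to too large a space, contradicting nondegeneracy). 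Thus $\Phi$ has one positive and one negative direction on $\operatorname{span}\{Y_1^+,Y_1^-\}$. Since the total negative index of $\Phi$ is one, $\Phi$ is nonnegative on the $B$-orthogonal complement of that plane. It vanishes there only on the kernel $\operatorname{span}\{\partial_jQ,iQ\}$, and these directions are excluded by the orthogonality conditions $\langle f,\partial_jQ\rangle=\langle f,iQ\rangle=0$. A compactness/contradiction argument (the potential terms $Q^2$ are relatively compact) upgrades this strict positivity to the $H^1$ lower bound $\Phi(f)\gtrsim\|f\|_{H^1}^2$ on that subspace. The stated inequality for general $f$ then follows by projecting $f$ onto the finite-dimensional set of excluded directions and absorbing the projection coefficients into the inner-product terms.
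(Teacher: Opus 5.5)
Your proposal is correct and follows essentially the paper's route: the paper gives no argument of its own but cites these classical facts (Grillakis, Weinstein, Duyckaerts--Roudenko, Landoulsi), and your sketch reconstructs the arguments from those sources, namely nondegeneracy of $Q$, the reduction to $(L_1^-)^{1/2}L_1^+(L_1^-)^{1/2}$ on $Q^\perp$, and the $B$-orthogonality argument for coercivity. Two steps you state loosely should be tightened: $B(Y_1^+,Y_1^-)\neq0$ holds because otherwise $\Phi$ would vanish on the $6$-dimensional space $\operatorname{span}\{Y_1^+,Y_1^-,\partial_{x_1}Q,\partial_{x_2}Q,\partial_{x_3}Q,iQ\}$, while a form with one negative direction and a $4$-dimensional kernel has no nonpositive subspace of dimension above $5$; and the absence of eigenvalues $i\tau$ with $0<|\tau|<1$ does not follow from your reduction (it is the separate ``gap property'' and must be imported), though nothing in the coercivity estimate or the rest of the paper uses it.
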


We can therefore obtain the following corollary.  We note that it is the coercivity estimate below that leads us to impose the restriction $\alpha>1$ in \eqref{Pdelta}. 

\begin{corollary}\label{C:coercivity} The kernel of $L_\alpha^+$ equals the span of $\{\partial_{x_j}Q_\alpha\}_{j=1}^3$.  The kernel of $L_\alpha^-$ equals the span of $Q_\alpha$.

The functions $Y^\pm_\alpha(x):=\alpha Y^\pm_1(\alpha x)$ are eigenfunctions of $\mathcal{L}_\alpha$ corresponding to eigenvalues $\pm\alpha^2\lambda_1$. 

Given $\alpha>1$, there exist $c>0$ (independent of $\alpha$) and $C(\alpha)>0$ such that for $f=f_1+if_2\in H^1$,
\begin{align*}
c\|f\|_{H^1}^2 & \leq \langle L_\alpha^+ f_1,f_1\rangle + \langle L_\alpha^- f_2,f_2\rangle \\
& \quad + C(\alpha)\biggl[\sum_{j=1}^3 |\langle f,\partial_{x_j} Q_\alpha\rangle|^2 + |\langle f, iQ_\alpha\rangle|^2 +\sum_{\sigma\in\{\pm\}} |\langle if,Y_\alpha^\sigma\rangle|^2\biggr]. 
\end{align*} 
\end{corollary}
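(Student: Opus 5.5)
The plan is to deduce everything from Lemma~\ref{L:spectrum} by the rescaling identities \eqref{rescaling}. The kernel statements follow directly. Since $L_\alpha^\pm = \alpha^2 D_\alpha L_1^\pm D_\alpha^{-1}$, we have $L_\alpha^\pm f=0$ if and only if $L_1^\pm(D_\alpha^{-1}f)=0$. Moreover $D_\alpha$ maps $\partial_{x_j}Q$ to a multiple of $\partial_{x_j}Q_\alpha$, because $\partial_{x_j}Q_\alpha(x) = \alpha^2(\partial_jQ)(\alpha x)$, and maps $Q$ to $\alpha^{-1}Q_\alpha$. Hence the kernels are as stated. For the eigenfunctions, $\mathcal{L}_\alpha Y_\alpha^\pm = \alpha^2 D_\alpha\mathcal{L}_1 D_\alpha^{-1}(\alpha D_\alpha Y_1^\pm) = \alpha^3 D_\alpha(\pm\lambda_1 Y_1^\pm) = \pm\alpha^2\lambda_1 Y_\alpha^\pm$. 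Here I use that $D_\alpha$ commutes with multiplication by $i$ and with complex conjugation, so the real/imaginary splitting in $\mathcal{L}$ is respected.

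For the coercivity estimate, given $f$ I set $g := D_\alpha^{-1}f$, i.e.\ $g(x)=f(x/\alpha)$, and apply Lemma~\ref{L:spectrum} to $g$. The quadratic form rescales as
\[
\langle L_\alpha^+ f_1,f_1\rangle = \alpha^2\langle D_\alpha L_1^+g_1, D_\alpha g_1\rangle = \alpha^{2-3}\langle L_1^+g_1,g_1\rangle,
\]
and similarly for $L_\alpha^-$. The inner products rescale as
\[
\langle f,\partial_jQ_\alpha\rangle = \alpha^{2}\langle D_\alpha g, D_\alpha\partial_jQ\rangle = \alpha^{-1}\langle g,\partial_jQ\rangle,
\]
with $\langle f,iQ_\alpha\rangle = \alpha^{-2}\langle g,iQ\rangle$ and $\langle if,Y_\alpha^\pm\rangle = \alpha^{-2}\langle ig,Y_1^\pm\rangle$. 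So multiplying Lemma~\ref{L:spectrum} for $g$ by $\alpha^{-1}$ gives
\[
\alpha^{-1}\|g\|_{H^1}^2 \lesssim \langle L_\alpha^+f_1,f_1\rangle+\langle L_\alpha^-f_2,f_2\rangle + C(\alpha)[\cdots],
\]
with $C(\alpha)$ some power of $\alpha$.

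It then remains to compare $\alpha^{-1}\|g\|_{H^1}^2$ with $\|f\|_{H^1}^2$ uniformly in $\alpha>1$. This is the only delicate point, since the required independence of $c$ from $\alpha$ is exactly where $\alpha>1$ enters. We have $\|g\|_{L^2}^2=\alpha^3\|f\|_{L^2}^2$ and $\|\nabla g\|_{L^2}^2 = \alpha\|\nabla f\|_{L^2}^2$. Hence
\[
\alpha^{-1}\|g\|_{H^1}^2 = \alpha^2\|f\|_{L^2}^2 + \|\nabla f\|_{L^2}^2 \geq \|f\|_{H^1}^2 \quad\text{for }\alpha\geq1.
\]
This yields the claim with $c$ equal to the reciprocal of the implicit constant in Lemma~\ref{L:spectrum}, which is independent of $\alpha$. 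All other $\alpha$-dependence is absorbed into $C(\alpha)$. One must only be careful to track the powers of $\alpha$ in the projection terms, but their exact values are irrelevant since they enter $C(\alpha)$.
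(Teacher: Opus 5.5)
Your proposal is correct and follows the paper's own proof. Both set $g=D_\alpha^{-1}f$, apply Lemma~\ref{L:spectrum} to $g$, multiply by $\alpha^{-1}$, and conclude using $\alpha^{-1}\|D_\alpha^{-1}f\|_{H^1}^2=\alpha^2\|f\|_{L^2}^2+\|\nabla f\|_{L^2}^2\geq\|f\|_{H^1}^2$ for $\alpha>1$. Your explicit tracking of the powers of $\alpha$ in the projection terms, which gives $C(\alpha)\lesssim\alpha^3$, is slightly more careful than the paper's presentation.
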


\begin{proof} The claims about the spectra follow from \eqref{rescaling}.  For the coercivity estimate, we use Lemma~\ref{L:spectrum}, \eqref{rescaling}, and a change of variables to obtain the following: for $f=f_1+if_2$, 
\begin{align*}
\langle  L_\alpha^+ &f_1,f_1\rangle + \langle L_\alpha^- f_2,f_2\rangle \\
& = \alpha^{-1}\bigl[\langle L_1^+ D_\alpha^{-1} f_1,D_\alpha^{-1} f_1\rangle + \langle L_1^- D_\alpha^{-1} f_2,D_\alpha^{-1} f_2\rangle\bigr] \\
& \geq c\alpha^{-1}\|D_\alpha^{-1} f\|_{H^1}^2 \\
& \quad - C\alpha^{-1}\biggl[\sum_{j=1}^3 |\langle D_\alpha^{-1} f,\partial_{x_j}Q\rangle|^2 + |\langle D_\alpha^{-1}  f,iQ\rangle|^2 + \sum_{\sigma\in\{\pm\}} |\langle iD_\alpha^{-1} f, Y_1^\sigma\rangle|^2 \biggr] \\
& \geq c[\alpha^2\|f\|_{L^2}^2 + \|\nabla f\|_{L^2}^2] \\
& \quad - C(\alpha)\biggl[\sum_{j=1}^3 |\langle f,\partial_{x_j} Q_\alpha\rangle|^2 + |\langle f, iQ_\alpha\rangle|^2 +\sum_{\sigma\in\{\pm\}} |\langle if,Y_\alpha^\sigma\rangle|^2\biggr] \\
& \geq c\|f\|_{H^1}^2 - C(\alpha)\biggl[\sum_{j=1}^3 |\langle f,\partial_{x_j} Q_\alpha\rangle|^2 + |\langle f, iQ_\alpha\rangle|^2 + \sum_{\sigma\in\{\pm\}} |\langle if,Y_\alpha^\sigma\rangle|^2\biggr],
\end{align*}
where we have used $\alpha>1$ in the final inequality. 
\end{proof}

Finally, we observe that since $Y_\alpha^\pm$ are eigenfuctions of $\mathcal{L}_\alpha$ with nonzero eigenvalue and $\{Q_\alpha,i\partial_{x_1}Q_\alpha,i\partial_{x_2}Q_\alpha,i\partial_{x_3}Q_\alpha\}$ are in the kernel of the adjoint of $\mathcal{L}_\alpha$, we have the orthogonality relations
\begin{equation}\label{QY-orthog}
\langle Y_\alpha^\pm, Q_\alpha\rangle = \langle i Y_\alpha^\pm,\partial_{x_j} Q_\alpha\rangle = 0 \qtq{for}j\in\{1,2,3\}.
\end{equation}

\section{Modulated solution}\label{S:modulation} 

The goal of this section is to prove Proposition~\ref{P:modulation}, in which we construct a local-in-time solution with prescribed components $A_\alpha^\pm$ in the directions $Y_\alpha^\pm$ at a final time, and show that this solution admits a decomposition into a modulated traveling wave plus a remainder term satisfying certain orthogonality conditions.

Let us begin by introducing some notation, which will be used throughout the remainder of the paper.  Given $(\alpha,\xi)\in(1,\infty)\times\R^3\backslash\{0\}$ and $(\gamma,y)\in\R^{1+3}$, we define the time-dependent operators $\mathcal{T}_{(t;\alpha,\xi,\gamma,y)}$ by
\[
[\mathcal{T}_{(t;\alpha,\xi,\gamma,y)}f](x) = e^{i[\alpha^2 t + x\cdot\xi - |\xi|^2 t +\gamma]}f(x-2t\xi-y). 
\]

Writing
\[
Q_\alpha(x):=\alpha Q(\alpha x),
\]
the traveling wave $W$ with parameters $(\alpha,\xi)$ defined by \eqref{W1} corresponds to
\[
W_0(t)= \mathcal{T}_{(t;\alpha,\xi,0,0)} Q_\alpha,
\]
while the modulated traveling wave $\tilde W$ corresponding to parameters $(\alpha,\xi,\gamma(t),y(t))$ is defined by 
\[
W(t) = \mathcal{T}_{(t;\alpha,\xi,\gamma(t),y(t))} Q_\alpha.
\]

We first have the following modulation result at fixed times. 
\begin{lemma}\label{L:modulation} Fix $(\alpha,\xi)\in(1,\infty)\times\R^3\backslash\{0\}$, $t\in\R$, and $f\in H^1$. There exists $\eps_0$ such that if
\begin{equation}\label{modulation-close}
\|f-\mathcal{T}_{(t;\alpha,\xi,0,0)}Q_\alpha\|_{H^1}<\eps_0,
\end{equation}
then there exist $(\gamma(t),y(t))\in\R^{1+3}$ such that the following holds: if we define $z(t)\in H^1$ by
\begin{equation}\label{modulation-z}
f = \mathcal{T}_{(t;\alpha,\xi,\gamma(t),y(t))}[Q_\alpha + z(t)], 
\end{equation}
then we have
\begin{equation}\label{modulation-orthog}
\langle iz(t), Q_\alpha\rangle = \langle z(t),\partial_{x_j} Q_\alpha\rangle   = 0 \qtq{for}j\in\{1,2,3\}
\end{equation}
and
\begin{equation}\label{modulation-bds}
\|z(t)\|_{H^1}+|\gamma(t)|+|y(t)| \lesssim \|f-\mathcal{T}_{(t;\alpha,\xi,0,0)}Q_\alpha\|_{H^1}.
\end{equation}
\end{lemma}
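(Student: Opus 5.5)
This is the standard implicit function theorem argument. First I reduce to a fixed frame. Since $\mathcal{T}_{(t;\alpha,\xi,\gamma,y)}$ is unitary on $L^2$ and bounded with bounded inverse on $H^1$, with bounds depending on $|\xi|$, I set $g:=\mathcal{T}_{(t;\alpha,\xi,0,0)}^{-1}f$. Then $\|g-Q_\alpha\|_{H^1}\lesssim_{\xi}\|f-\mathcal{T}_{(t;\alpha,\xi,0,0)}Q_\alpha\|_{H^1}$.

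A direct computation gives
\[
\mathcal{T}_{(t;\alpha,\xi,\gamma,y)}^{-1}\mathcal{T}_{(t;\alpha,\xi,0,0)}g(x)=e^{-i\gamma}e^{iy\cdot\xi}g(x+y),
\]
so the decomposition \eqref{modulation-z} reads $z=e^{-i\gamma+iy\cdot\xi}g(\cdot+y)-Q_\alpha$. It is convenient to reparametrize the phase as $\theta:=\gamma-y\cdot\xi$; this is a linear change of variables, so bounds on $(\theta,y)$ give bounds on $(\gamma,y)$ with constants depending on $|\xi|$.

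Next I define $F:\R^{1+3}\times H^1\to\R^4$ by
\begin{align*}
F_0(\theta,y;g)&=\langle i e^{-i\theta}g(\cdot+y)-iQ_\alpha,Q_\alpha\rangle=\langle ie^{-i\theta}g(\cdot+y),Q_\alpha\rangle,\\
F_j(\theta,y;g)&=\langle e^{-i\theta}g(\cdot+y)-Q_\alpha,\partial_{x_j}Q_\alpha\rangle.
\end{align*}
The second equality in $F_0$ holds because $\langle iQ_\alpha,Q_\alpha\rangle=0$. Similarly $\langle Q_\alpha,\partial_jQ_\alpha\rangle=0$ by oddness, so the subtracted term in $F_j$ vanishes. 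Hence $F(0,0;Q_\alpha)=0$.

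Moving the translation onto the test functions, $F$ is $C^1$ in $(\theta,y)$ and affine and continuous in $g$. Its partial derivatives at $(0,0;Q_\alpha)$ are as follows:
\begin{align*}
\partial_\theta F_0&=\langle Q_\alpha,Q_\alpha\rangle=\|Q_\alpha\|_{L^2}^2, &\partial_{y_k}F_0&=\langle i\partial_kQ_\alpha,Q_\alpha\rangle=0,\\
\partial_\theta F_j&=\langle -iQ_\alpha,\partial_jQ_\alpha\rangle=0, &\partial_{y_k}F_j&=\langle\partial_kQ_\alpha,\partial_jQ_\alpha\rangle=\tfrac13\delta_{jk}\|\nabla Q_\alpha\|_{L^2}^2.
\end{align*}
The last identity uses radial symmetry. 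The Jacobian is therefore diagonal with nonzero entries, hence invertible.

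The implicit function theorem then gives $\eps_0>0$ and a $C^1$ map $g\mapsto(\theta(g),y(g))$ near $Q_\alpha$ with $F=0$ and $|\theta|+|y|\lesssim\|g-Q_\alpha\|_{H^1}$. Undoing the reparametrization gives $(\gamma,y)$ and \eqref{modulation-orthog}.

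For the bound on $z$, I write
\[
z=e^{-i\theta}\bigl(g-Q_\alpha\bigr)(\cdot+y)+\bigl[e^{-i\theta}Q_\alpha(\cdot+y)-Q_\alpha\bigr].
\]
The first term is bounded in $H^1$ by $\|g-Q_\alpha\|_{H^1}$. The second is $\lesssim|\theta|+|y|$, since $Q_\alpha\in H^2$. This yields \eqref{modulation-bds}, with implicit constants depending on $\alpha$ and $\xi$ but not on $t$. The only point needing care is this uniformity in $t$. It holds because the time dependence is only through unitary phase and translation factors, which are absorbed into the definition of $g$, so there is no real obstacle.
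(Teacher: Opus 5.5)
There is a computational error in your reduction, and it breaks the final step as written. In $\mathcal{T}_{(t;\alpha,\xi,\gamma,y)}$ the Galilean phase $e^{ix\cdot\xi}$ is evaluated at the output point $x$, not at the shifted point $x-2t\xi-y$. It therefore cancels exactly between $\mathcal{T}_{(t;\alpha,\xi,\gamma,y)}^{-1}$ and $\mathcal{T}_{(t;\alpha,\xi,0,0)}$. Indeed, $w=\mathcal{T}_{(t;\alpha,\xi,\gamma,y)}^{-1}\mathcal{T}_{(t;\alpha,\xi,0,0)}g$ is characterized by $e^{i\gamma}w(x-2t\xi-y)=g(x-2t\xi)$, so
\[
\mathcal{T}_{(t;\alpha,\xi,\gamma,y)}^{-1}\mathcal{T}_{(t;\alpha,\xi,0,0)}g=e^{-i\gamma}g(\cdot+y),
\]
with no factor $e^{iy\cdot\xi}$. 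The $z$ in \eqref{modulation-z} is thus $e^{-i\gamma}g(\cdot+y)-Q_\alpha$. Your recipe $\gamma:=\theta+y\cdot\xi$ then gives $z=e^{-iy\cdot\xi}w-Q_\alpha$, where $w:=e^{-i\theta}g(\cdot+y)$ is the function whose orthogonality you arranged via $F=0$. Since $\langle iw,Q_\alpha\rangle=0$, you get
\[
\langle iz,Q_\alpha\rangle=\sin(y\cdot\xi)\,\langle w,Q_\alpha\rangle=\sin(y\cdot\xi)\bigl(\|Q_\alpha\|_{L^2}^2+\mathcal{O}(\|g-Q_\alpha\|_{H^1})\bigr).
\]
This is nonzero as soon as $y\cdot\xi\neq0$ (for $g$ near $Q_\alpha$), so \eqref{modulation-orthog} does not follow from what you wrote.

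The fix is to drop the reparametrization: set $\theta=\gamma$ and apply the implicit function theorem directly in $(\gamma,y)$. With that correction your proof is the paper's proof. The functionals are the same; the paper writes them in terms of $f-\mathcal{T}_{(t;\alpha,\xi,0,0)}Q_\alpha$ near $0$, you in terms of the pulled-back $g$ near $Q_\alpha$, which is equivalent. The Jacobian is the same diagonal one, with entries $\|Q_\alpha\|_{L^2}^2$ and $\|\partial_{x_j}Q_\alpha\|_{L^2}^2$, and the splitting used to bound $z$ is the same. Your pullback to a fixed frame does have one small merit: it makes explicit that $\eps_0$ and the implicit constants are independent of $t$. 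The paper relies on this later, when applying the lemma on intervals $[T,T^*]$, without spelling it out.
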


\begin{proof} As the argument follows along standard lines, we will keep our presentation brief.  We define $\Phi:H^1\times\R^{1+3}\to\R^{1+3}$ by
\begin{align*}
\Phi_0(g,(\gamma,y)) & = \langle i\{\mathcal{T}_{(t;\alpha,\xi,\gamma,y)}^{-1}[g + \mathcal{T}_{(t;\alpha,\xi,0,0)}Q_\alpha]-Q_\alpha\},Q_\alpha\rangle, \\
\Phi_j(g,(\gamma,y)) &= \langle \mathcal{T}_{(t;\alpha,\xi,\gamma,y)}^{-1}[g + \mathcal{T}_{(t;\alpha,\xi,0,0)}Q_\alpha]-Q_\alpha,\partial_{x_j} Q_\alpha\rangle,\quad j\in\{1,2,3\}.
\end{align*}
We will use the Implicit Function Theorem to show that for $g$ in an $H^1$-neighborhood of $0$, there exist unique $(\gamma,y)$ such that $\Phi(g,(\gamma,y))=0$. 

To this end, first observe that $\Phi(0,(0,0))=(0,0)$.  Next, by direct calculation we can obtain 
\[
D_{(\gamma,y)} \Phi\big|_{(0,(0,0))} = \left[\begin{array}{cccc}  \|Q_\alpha\|_{L^2}^2 & 0 & 0 & 0 \\ 0 &  \|\partial_{x_1} Q_\alpha\|_{L^2}^2 & 0 & 0  \\ 0 & 0 & \|\partial_{x_2} Q_\alpha\|_{L^2}^2 & 0 \\ 0 & 0 & 0 & \|\partial_{x_3} Q_\alpha\|_{L^2}^2 \end{array}\right],
\]
so that $|D_{(\gamma,y)}\Phi|_{(0,(0,0))}|\gtrsim_\alpha 1$.

Thus, by the Implicit Function Theorem, there exist open sets  $U\subset H^1$ and $V\subset \R^{1+3}$  with $(0,(0,0))\in U\times V$ and $h\in C^1(U;V)$ such that
\[
\Phi(g,(\gamma,y))=0 \qtq{if and only if} (\gamma,y)=h(g). 
\]

Now observe that for $\eps_0$ sufficiently small, \eqref{modulation-close} guarantees that 
\[
g(t):=f-\mathcal{T}_{(t;\alpha,\xi,0,0)}Q_\alpha\in U.
\] 
We then set $(\gamma(t),y(t)) = h(g(t))$ and define $z(t)$ as in \eqref{modulation-z}.  To complete the proof, we must verify \eqref{modulation-orthog} and \eqref{modulation-bds}.  To this end, we first observe that by construction, \eqref{modulation-orthog} is exactly equivalent to $\Phi(g(t),(\gamma(t),y(t))) = 0$.  As for \eqref{modulation-bds}, we first observe that
\[
|\gamma(t)|+|y(t)| \lesssim \|g(t)\|_{H^1}= \|f-\mathcal{T}_{(t;\alpha,\xi,0,0)}Q_\alpha\|_{H^1}.
\]
Finally, by construction and direct calculation
\begin{align*}
\|z(t)\|_{H^1} & \leq \| \mathcal{T}_{(t;\alpha,\xi,\gamma(t),y(t))}^{-1}[g(t) + \mathcal{T}_{(t;\alpha,\xi,0,0)} Q_\alpha] - Q_\alpha\|_{H^1}  \\
& \leq \| \mathcal{T}^{-1}_{(t;\alpha,\xi,\gamma(t),y(t))} g(t)\|_{H^1} + \|\mathcal{T}^{-1}_{(t;\alpha,\xi,\gamma(t),y(t))}\mathcal{T}_{(t;\alpha,\xi,0,0)} Q_\alpha - Q_\alpha\|_{H^1} \\
& \lesssim \|g(t)\|_{H^1} + |\gamma(t)|+|y(t)| \\
& \lesssim \|f-\mathcal{T}_{(t;\alpha,\xi,0,0)}Q_\alpha\|_{H^1},
\end{align*}
which completes the proof. \end{proof}

We will apply Lemma~\ref{L:modulation} for $t$ belonging to intervals of the form $I=[T,T^*]$, obtaining functions of the form $\gamma:I\to\R$ and $y:I\to\R^3$.

\begin{proposition}\label{P:modulation} Let $\delta\in(0,\delta_0)$ and $(\alpha,\xi)\in \mathcal{P}_\delta$. There exists $\{C_j\}_{j=0}^2\subset(0,\infty)$ such that for all $T^*$ sufficiently large the following holds.  

Let 
\[
a\in I^*:= [-e^{-\frac12\delta\alpha|\xi| T^*}, e^{-\frac12\delta\alpha|\xi| T^*}].
\]
There exist $\beta^\pm \in[-C_0a,C_0a]$ and $T<T^*$ such that if
\begin{equation}\label{final-data}
\varphi= \mathcal{T}_{(T^*;\alpha,\xi,0,0)}\biggl[Q_\alpha - \sum_{\sigma\in\{\pm\}} i\beta^\sigma Y_\alpha^\sigma\biggr],
\end{equation}
then the solution $u$ to \eqref{NLS} with $u|_{t=T^*}=\varphi$ exists on $[T,T^*]$ and there exist $C^1$ functions $\gamma:[T,T^*]\to\R$ and $y:[T,T^*]\to\R^3$ with
\begin{equation}\label{finalstate0}
(\gamma(T^*),y(T^*))=(0,0)
\end{equation}
such that the following hold for all $t\in[T,T^*]$: 
\begin{align}
\|u(t)-\mathcal{T}_{(t;\alpha,\xi,0,0)}Q_\alpha\|_{H^1} < \eps_0,\label{bound0}
\end{align}
and if $z,A_\alpha^\pm$ are defined via
\begin{equation}\label{zA}
u(t) = \mathcal{T}_{(t;\alpha,\xi,\gamma(t),y(t))}[Q_\alpha + z(t)]\qtq{and} A_\alpha^\pm(t)=\langle iz(t), Y_\alpha^\pm\rangle,
\end{equation}
then
\begin{align}
(A_\alpha^+(T^*),A_\alpha^-(T^*))&=(0,a), \label{finalstate0a} \\
\langle z(t),\partial_{x_j} Q_\alpha\rangle = \langle iz(t),Q_\alpha\rangle&=0\qtq{for} j\in\{1,2,3\}, \label{orthogonality}
\end{align}
and we have the following bounds: 
\begin{align}
\|z(t)\|_{H^1} & \leq C_1 e^{-\frac13\delta\alpha|\xi|t}, \label{bound1}\\
|\gamma(t)| + |y(t)| & \leq C_2 e^{-\frac13\delta\alpha|\xi|t}, \label{bound2} \\
|A_\alpha^+(t)| & \leq e^{-\frac12\delta\alpha|\xi|t}. \label{bound3}
\end{align}
Furthermore, the solution may be extended backward in time as long as \eqref{bound0} holds, and in this case we can also extend $(\gamma(t),y(t))$ such that \eqref{orthogonality} holds. 
\end{proposition}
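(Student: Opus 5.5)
The plan has three stages: choose $\beta^\pm$ by a finite-dimensional linear problem at $t=T^*$, apply Lemma~\ref{L:modulation} at $t=T^*$, and then run continuity backward in time from $T^*$ to get a short interval $[T,T^*]$ on which all the bounds hold.

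\emph{Choice of $\beta^\pm$.} With $\varphi$ as in \eqref{final-data}, we have
\[
\|\varphi-\mathcal{T}_{(T^*;\alpha,\xi,0,0)}Q_\alpha\|_{H^1}\lesssim_\alpha |\beta^+|+|\beta^-|.
\]
So if $|\beta^\pm|\le C_0|a|$ and $T^*$ is large, this distance is below $\eps_0$, and Lemma~\ref{L:modulation} applies at $t=T^*$. I will show that $\beta^\pm$ can be chosen so that the modulation parameters at $T^*$ are exactly zero and $(A_\alpha^+,A_\alpha^-)(T^*)=(0,a)$.

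By \eqref{QY-orthog}, the function $z=-\sum_\sigma i\beta^\sigma Y_\alpha^\sigma$ already satisfies the orthogonality conditions: $\langle iz,Q_\alpha\rangle=\sum_\sigma\beta^\sigma\langle Y_\alpha^\sigma,Q_\alpha\rangle=0$ and $\langle z,\partial_{x_j}Q_\alpha\rangle=-\sum_\sigma\beta^\sigma\langle iY^\sigma_\alpha,\partial_{x_j}Q_\alpha\rangle=0$. By the uniqueness in the Implicit Function Theorem, the decomposition at $T^*$ is therefore $(\gamma,y)(T^*)=(0,0)$ with this $z$, which gives \eqref{finalstate0}.

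Then $A_\alpha^\pm(T^*)=\sum_\sigma\beta^\sigma\langle Y_\alpha^\sigma,Y_\alpha^\pm\rangle$, a $2\times2$ linear system with Gram matrix
\[
G=\begin{pmatrix}1 & \langle Y^+_\alpha,Y^-_\alpha\rangle\\ \langle Y^-_\alpha,Y^+_\alpha\rangle & 1\end{pmatrix}.
\]
Since $Y_1^\pm$ are $L^2$-normalized and correspond to distinct eigenvalues, they are linearly independent, so $|\langle Y_1^+,Y_1^-\rangle|<1$ by Cauchy--Schwarz and $G$ is invertible. Scaling preserves the inner product, so the matrix does not depend on $\alpha$. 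Solving $G\beta=(0,a)^T$ gives $|\beta^\pm|\le C_0|a|$ and \eqref{finalstate0a}. This also explains the $\mathcal{O}(|a|)$ constraint on $\beta^\pm$.

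\emph{Backward extension.} By Proposition~\ref{P:LWP}, the solution exists on some interval around $T^*$ and is continuous in $H^1$. At $T^*$ the quantity in \eqref{bound0} is $\lesssim |a|\lesssim e^{-\frac12\delta\alpha|\xi|T^*}$, which is strictly smaller than $\eps_0$ and than the right-hand sides of \eqref{bound1}--\eqref{bound3} once $T^*$ is large and $C_1,C_2$ exceed the implied constants. By continuity there is $T<T^*$ such that \eqref{bound0} holds on $[T,T^*]$. There Lemma~\ref{L:modulation} applies at each time and defines $(\gamma(t),y(t))=h(g(t))$ with $g(t)=u(t)-\mathcal{T}_{(t;\alpha,\xi,0,0)}Q_\alpha$.

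Continuity of $t\mapsto g(t)$ in $H^1$ and $h\in C^1$ give continuity of $\gamma,y,z,A_\alpha^\pm$. The bounds \eqref{bound1}--\eqref{bound3} hold strictly at $T^*$, so after shrinking $T$ they persist on $[T,T^*]$. The same argument works whenever \eqref{bound0} holds, which gives the final extension claim.

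\emph{$C^1$ regularity of the modulation parameters.} This is the step that needs care, since $u$ is only $C_tH^1$. I would differentiate the orthogonality identities $\Phi(g(t),h(g(t)))=0$ in the weak sense. The quantities $\langle u(t),\mathcal{T}(\cdot)\Psi\rangle$ with $\Psi$ Schwartz-like, namely $Q_\alpha$ and $\partial_{x_j}Q_\alpha$, are $C^1$ in $t$ and in the parameters: one uses the equation \eqref{NLS} in $H^{-1}$, pairs against the smooth, exponentially decaying test functions, and notes that $V\Psi\in L^2$. The resulting Jacobian in $(\gamma,y)$ is a perturbation of the nondegenerate diagonal matrix computed in Lemma~\ref{L:modulation}. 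The Implicit Function Theorem applied to the map $(t,\gamma,y)\mapsto$ (these pairings) then yields $C^1$ dependence on $t$. If needed, one can first approximate by $H^2$ data, derive the modulation ODE, and pass to the limit using Proposition~\ref{P:LWP}.
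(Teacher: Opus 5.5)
Your proposal is correct and follows essentially the same route as the paper: you solve the $2\times 2$ Gram system for $\beta^\pm$, use \eqref{QY-orthog} together with uniqueness in the Implicit Function Theorem to get $(\gamma,y)(T^*)=(0,0)$, and then shrink $[T,T^*]$ by continuity; your treatment of the $C^1$ regularity of $(\gamma,y)$ is more detailed than the paper's one-line remark and is fine. One harmless slip: in three dimensions $\langle Y_\alpha^\sigma,Y_\alpha^{\sigma'}\rangle=\alpha^{-1}\langle Y_1^\sigma,Y_1^{\sigma'}\rangle$, so the Gram matrix is $\alpha^{-1}$ times the $\alpha=1$ matrix (diagonal entries $\alpha^{-1}$, not $1$, and scaling does not preserve the inner product), but this changes neither its invertibility nor the bound $|\beta^\pm|\lesssim_\alpha |a|$.
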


\begin{proof}[Proof of Proposition~\ref{P:modulation}]  First observe that for $a\in I^*$ and \emph{any} $\beta^\pm\in[-C_0 a,C_0 a]$, if $\varphi$ is as in \eqref{final-data} then
\begin{align*}
\|\varphi - \mathcal{T}_{(T^*;\alpha,\xi,0,0)}Q_\alpha\|_{H^1} & \leq C_0 e^{-\frac12\delta\alpha|\xi|T^*} \sum_{\sigma\in\{\pm\}} \|\mathcal{T}_{(T^*;\alpha,\xi,0,0)} Y_\alpha^\sigma\|_{H^1}  \\
&\lesssim e^{-\frac12\delta\alpha|\xi|T^*}.
\end{align*}
Thus, applying the local theory for \eqref{NLS} and choosing $T^*$ sufficiently large, we find that the solution $u$ to \eqref{NLS} with $u|_{t=T^*}=\varphi$ exists and satisfies
\[
\|u(t)-\mathcal{T}_{(t;\alpha,\xi,0,0)}Q_{\alpha}\|_{H^1} \lesssim e^{-\frac13\delta|\alpha| t} <\eps_0 
\]
on some interval $[T,T^*]$ with $T<T^*$. 

It follows that we may apply Lemma~\ref{L:modulation} with $f=u(t)$ on the interval $[T,T^*]$.  In particular we may define modulation parameters $(\gamma(t),y(t))$ and $z(t)$ as in \eqref{zA} such that the orthogonality conditions \eqref{orthogonality} hold.  The fact that $(\gamma(T^*),y(T^*))=0$ follows from \eqref{final-data}, the uniqueness in the Implicit Function Theorem, and the orthogonality conditions \eqref{QY-orthog}.  Moreover, as long as \eqref{bound0} holds, we obtain $H^1$-bounds for $u$ and the modulation constraint \eqref{modulation-close}, which allows us to extend the solution $u(t)$ and as well as the functions $(\gamma(t),y(t))$.

Lemma~\ref{L:modulation} additionally yields the bounds 
\[
\|z(t)\|_{H^1} + |y(t)|+|\gamma(t)| \lesssim \|u(t)-\mathcal{T}_{(t;\alpha,\xi,0,0)}Q_{\alpha}\|_{H^1} \lesssim e^{-\frac13\delta|\alpha| t},
\] 
which imply \eqref{bound1}--\eqref{bound2}.  We also remark that the Implicit Function Theorem may be used to derive that $\gamma,y\in C_t^1([T,T^*])$. 
  
We now show that given $a\in I^*$ we can find unique $\beta^\pm\in[-C_0a,C_0a]$ to impose \eqref{finalstate0a}. To this end let us define the {invertible} matrix $\Upsilon_\alpha$ by
\[
\Upsilon_\alpha = \left[\begin{array}{cc} \langle Y_\alpha^+,Y_\alpha^+\rangle & \langle Y_\alpha^-,Y_\alpha^+\rangle \\ \langle Y_\alpha^+,Y_\alpha^-\rangle & \langle Y_\alpha^-,Y_\alpha^-\rangle \end{array}\right]\qtq{and let} \left[\begin{array}{c} \beta^+ \\ \beta^- \end{array} \right] = \Upsilon_\alpha^{-1} \left[\begin{array}{c} 0 \\ a \end{array}\right].
\]
It follows that $|\beta^\pm| \lesssim_\alpha a$, and \eqref{finalstate0a} follows directly from \eqref{final-data} and \eqref{finalstate0}.

Finally, as $A_\alpha^+(T^*)=0$, we can guarantee that \eqref{bound3} holds by shrinking the interval $[T,T^*]$ if necessary.\end{proof}

\section{Bootstrap argument}\label{S:bootstrap}

Given $\delta\in(0,\delta_0)$ and $(\alpha,\xi)\in\mathcal{P}_\delta$, we may choose $T_0$ sufficiently large that
\[
e^{-\frac{1}{10}\delta\alpha|\xi| T_0} \ll \eps_0,
\]
where $\eps_0$ is the modulation threshold as in Lemma~\ref{L:modulation}.  We assume that $T_0$ is chosen in this fashion throughout the remainder of the paper. 

The next proposition shows that we can improve our estimates on the solution constructed in Proposition~\ref{P:modulation} as long as the component $A^-(t)$ remains under control.

\begin{proposition}\label{P:bootstrap} Let $\delta\in(0,\delta_0)$ and $(\alpha,\xi)\in \mathcal{P}_\delta$. Importing all notation from Proposition~\ref{P:modulation}, let 
\[
a\in [-e^{-\frac12\delta\alpha|\xi| T^*}, e^{-\frac12\delta\alpha|\xi| T^*}],
\]
and let $u:[T,T^*]\times\R^3\to\C$ be the solution constructed in Proposition~\ref{P:modulation} with
\[
(A_\alpha^+(T^*),A_\alpha^-(T^*))=(0,a).
\]
 Assume $T\geq T_0$.  If
\begin{equation}\label{bound4}
|A^-(t)|\leq e^{-\frac12\delta\alpha|\xi|t}
\end{equation}
for all $t\in[T,T^*]$, then we have
\begin{align}
\|z(t)\|_{H^1} & \lesssim e^{-\frac12\delta\alpha|\xi|t}, \label{bound12}\\
|y(t)| + |\gamma(t)| & \lesssim e^{-\frac12\delta\alpha|\xi|t}, \label{bound22} \\
|A_\alpha^+(t)| & \lesssim e^{-\frac23\delta\alpha|\xi|t}\label{bound32}
\end{align}
for all $t\in[T,T^*]$.  In particular,
\begin{equation}\label{mainbd}
\|u(t) - \mathcal{T}_{(t;\alpha,\xi,0,0)}Q_\alpha\|_{H^1} \lesssim e^{-\frac12\delta\alpha|\xi|t}
\end{equation}
for $t\in[T,T^*]$. 
\end{proposition}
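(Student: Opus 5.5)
The argument is a bootstrap on $[T,T^*]$. We assume the weaker bounds \eqref{bound1}--\eqref{bound3} from Proposition~\ref{P:modulation} together with \eqref{bound4}, and upgrade them to \eqref{bound12}--\eqref{bound32}. A continuity argument then propagates the conclusions on any interval on which \eqref{bound4} holds, since \eqref{bound0} persists for $T\geq T_0$ by the choice of $T_0$. Throughout we write $\kappa:=\frac12\delta\alpha|\xi|$. The steps, in order:
\begin{itemize}
\item derive the modulation equations for $(\gamma,y)$ and the equation for $z$;
\item estimate the variation of the Lyapunov functional $H$ from \eqref{Lyapunov0};
\item close the energy estimate for $z$ using coercivity;
\item control $A_\alpha^+$ via \eqref{Adot}.
\end{itemize}

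\textbf{Step 1: equation for $z$ and modulation.} Substituting \eqref{zA} into \eqref{NLS} gives
\[
\partial_t z + \mathcal{L}_\alpha z = i\dot\gamma(Q_\alpha+z) + \dot y\cdot\nabla(Q_\alpha+z) + N(z) - iV_t(Q_\alpha+z),
\]
where $N$ is at least quadratic in $z$ and $V_t:=V(\cdot+2t\xi+y(t))$. Differentiate the orthogonality conditions \eqref{orthogonality} in time and use that $Q_\alpha,\partial_jQ_\alpha$ lie in the kernels of $L_\alpha^\mp$. This yields
\[
|\dot\gamma|+|\dot y|\lesssim \|z\|_{H^1}^2+\|z\|_{H^1}\,(|\dot\gamma|+|\dot y|)+\|V_t\Psi\|_{L^2}.
\]
Here $\Psi\in\{Q_\alpha,\nabla Q_\alpha\}$, and we also use $\|V_tz\|\lesssim$ (small)$\|z\|$ interaction terms. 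By Lemma~\ref{L:WI}, \eqref{V} (with the exponent $c$ taken large) and $|y|\ll1$, we have $\|V_t\Psi\|_{L^2}\lesssim e^{-\delta_0\alpha|\xi|t}$. So $|\dot\gamma|+|\dot y|\lesssim \|z\|_{H^1}^2+e^{-2\kappa t}$ roughly, plus a linear interaction term that is also exponentially small. Integrating from $T^*$, where $(\gamma,y)=(0,0)$ by \eqref{finalstate0}, gives \eqref{bound22} once \eqref{bound12} is known, since $\|z\|^2\lesssim e^{-2\kappa t}$ is integrable.

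\textbf{Step 2: the Lyapunov functional.} Expand $H(u(t))-H(W_0(t))$ using that $e^{ix\cdot\xi}Q_\alpha$ is a critical point of $H$ and that $H$ is invariant under the modulations; this gives \eqref{Lyapunov1}. By Corollary~\ref{C:coercivity}, \eqref{orthogonality}, \eqref{bound4} and \eqref{bound3} we obtain
\[
c\|z(t)\|_{H^1}^2\le |H(u(T^*))-H(W_0)|+\int_t^{T^*}\bigl|\tfrac{d}{ds}H(u)\bigr|\,ds+2C(\alpha)e^{-2\kappa t}+O(\|z\|_{H^1}^3).
\]
At $T^*$ the data \eqref{final-data} differ from $W_0$ by an element of the span of $iY_\alpha^\pm$ of size $|a|$. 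Hence the first term is $O(|a|^2)\lesssim e^{-2\kappa T^*}$.

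For the time derivative: $E_0+\frac12 V$-mass and $M$ are conserved, so $\frac{d}{dt}H=-\frac12\frac{d}{dt}\int V|u|^2-\frac12\xi\cdot\frac{d}{dt}P$. The first term can be rewritten by the virial identity as $\Im\int\nabla V\cdot \bar u\nabla u$ (or handled by moving it into $H$ as a conserved piece). The momentum derivative is $\frac{d}{dt}P=-2\int\nabla V|u|^2$. Substituting $u=\mathcal{T}[Q_\alpha+z]$, the terms quadratic in $Q_\alpha$ and linear in $z$ carry $\nabla V_t\Psi$. This gives the bound \eqref{Lyapunov3}: $|\frac{d}{ds}H|\le C|\xi|\,\|\nabla V\|_{L^4}\|z\|_{H^1}^2+Ce^{-4\kappa s}\|z\|_{H^1}+Ce^{-4\kappa s}$. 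For the last two we again use Lemma~\ref{L:WI} with rapid decay of $\nabla V$; the phase $e^{ix\cdot\xi}$ contributes the factor $|\xi|$.

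\textbf{Step 3: closing the $z$ estimate (main obstacle).} Set $F(t):=\sup_{s\in[t,T^*]}e^{2\kappa s}\|z(s)\|_{H^1}^2$. The cubic term is absorbed using \eqref{bound1}. The integral contributes
\[
C|\xi|\,\|\nabla V\|_{L^4}\int_t^{T^*}e^{-2\kappa s}F\,ds\le \frac{C|\xi|\,\|\nabla V\|_{L^4}}{\delta\alpha|\xi|}\,e^{-2\kappa t}F(t).
\]
The first constraint in \eqref{Pdelta}, $\alpha>C\|\nabla V\|_{L^4}/(\delta|\xi|)$, is precisely what makes this prefactor smaller than $c/2$ (for suitable universal $C$). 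Hence it can be absorbed into the left-hand side, giving $F\lesssim1$, i.e. \eqref{bound12}. I expect the delicate point to be tracking constants honestly. The constant from coercivity must be independent of $\alpha$, as Corollary~\ref{C:coercivity} provides. One must also decide whether $\|\nabla V\|_{L^4}$ times the $|\xi|$-weight matches the stated constraint; the $|\xi|$ comes from $\xi\cdot P$ and cancels in the ratio. If instead the $V|u|^2$ derivative produces an unweighted term, one alternatively uses Gronwall directly.

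\textbf{Step 4: the unstable component.} From \eqref{Adot}, $\frac{d}{dt}(e^{-\alpha^2\lambda_1t}A_\alpha^+)=e^{-\alpha^2\lambda_1t}\mathcal{E}_\alpha^+$. Integrate from $T^*$, where $A_\alpha^+=0$, backward to $t$. Using $|\mathcal{E}^+|\lesssim\|z\|^2+|\dot\gamma|+|\dot y|\,\|z\|+e^{-4\kappa t}\lesssim e^{-2\kappa t}$, we get
\[
|A_\alpha^+(t)|\lesssim\int_t^{T^*}e^{-\alpha^2\lambda_1(s-t)}e^{-2\kappa s}\,ds\lesssim e^{-2\kappa t}\le e^{-\frac43\kappa t},
\]
which is \eqref{bound32}. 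Finally, \eqref{mainbd} follows from \eqref{bound12}, \eqref{bound22} and the triangle inequality, since $\|\mathcal{T}_{(t;\gamma,y)}Q_\alpha-\mathcal{T}_{(t;0,0)}Q_\alpha\|_{H^1}\lesssim|\gamma|+|y|$. The improved bounds are strictly better than \eqref{bound1}--\eqref{bound3} for $t\ge T_0$, which closes the continuity argument.
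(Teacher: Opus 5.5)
\textbf{Gap in Steps 2--3 (the point you flagged).} You bound the quadratic part of $\frac{d}{ds}H(u)$ by $C|\xi|\,\|\nabla V\|_{L^4}\|z\|_{H^1}^2$ and say the $|\xi|$ cancels in the ratio. It does cancel, but what remains is the prefactor $C\|\nabla V\|_{L^4}/(\delta\alpha)$. The first constraint in \eqref{Pdelta} only says $\delta\alpha|\xi|>C\|\nabla V\|_{L^4}$, i.e.\ that this prefactor is smaller than $|\xi|$, not smaller than $c/2$. In the relevant regime $|\xi|$ is large (cf.\ the proof of Corollary~\ref{C}), so neither your absorption nor Gronwall closes under the stated hypotheses.

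The missing idea is an exact cancellation. With the correct sign, $-\frac12\frac{d}{dt}\int V|u|^2=-\Im\int\nabla V\cdot\bar u\nabla u$. You wrote $+$, which makes the two $\xi$-terms add instead of cancel. The correct identity is $\frac{d}{dt}H(u)=-\Im\int\nabla V\cdot\bar u\nabla u+\xi\cdot\int\nabla V|u|^2$. Write $u=e^{i\phi}v(\cdot-\eta)$ with $\nabla\phi=\xi$, $v=Q_\alpha+z$, and $\eta=2t\xi+y(t)$. Then $\Im(\bar u\nabla u)=\xi|u|^2+[\Im(\bar v\nabla v)](\cdot-\eta)$, so the $\xi$-weighted terms cancel identically. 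Since $Q_\alpha$ is real, this gives
\[
\tfrac{d}{dt}H(u)=-\Im\int\nabla V(x+\eta)\cdot\bigl\{Q_\alpha\nabla z+\bar z\nabla Q_\alpha+\bar z\nabla z\bigr\}\,dx .
\]
This yields the unweighted bound $C\|\nabla V\|_{L^4}\|z\|_{H^1}^2$, plus interaction terms controlled by Lemma~\ref{L:WI}. With it, your $F$-absorption (or Gronwall, as in the paper) closes exactly when $\delta\alpha|\xi|>C\|\nabla V\|_{L^4}$.

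\textbf{Error in Step 1 (not fatal).} The linear terms in the modulation equations do not vanish. The conditions \eqref{orthogonality} read $\langle z_2,Q_\alpha\rangle=0$ and $\langle z_1,\partial_{x_j}Q_\alpha\rangle=0$. This is orthogonality to $\ker\mathcal{L}_\alpha$, not to $\ker\mathcal{L}_\alpha^*$. Differentiating them produces
\[
\langle z_1,L_\alpha^+Q_\alpha\rangle=-2\langle z_1,Q_\alpha^3\rangle \qtq{and} \langle z_2,L_\alpha^-\partial_{x_j}Q_\alpha\rangle=2\langle z_2,Q_\alpha^2\partial_{x_j}Q_\alpha\rangle,
\]
so $Q_\alpha$ and $\partial_{x_j}Q_\alpha$ are paired with the operators whose kernels they do \emph{not} lie in. The correct bound is $|\dot\gamma|+|\dot y|\lesssim\|z\|_{H^1}+\|V(\cdot+\eta)Q_\alpha\|_{L^2}+\|z\|_{H^1}^2+\|z\|_{H^1}^3\lesssim e^{-\kappa t}$, as in the paper. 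This still integrates to \eqref{bound22}. It also still gives $(|\dot\gamma|+|\dot y|)\|z\|_{H^1}\lesssim e^{-2\kappa t}$ in Step 4. So your treatment of $A_\alpha^+$, which is otherwise the paper's argument with a slightly better rate, survives once Steps 2--3 are repaired.
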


\begin{proof} We first prove \eqref{bound12}.  We define 
\[
H(u) := E_0(u) + \tfrac12(\alpha^2+|\xi|^2) M(u) - \tfrac12\xi\cdot P(u).
\]
Let us denote the traveling wave and modulated traveling wave  by
\[
W_0(t) = \mathcal{T}_{(t;\alpha,\xi,0,0)} Q_\alpha \qtq{and} W(t)= \mathcal{T}_{(t;\alpha,\xi,\gamma(t),y(t))} Q_\alpha,
\]
and recall that we have
\[
u(t)=\mathcal{T}_{(t;\alpha,\xi,\gamma(t),y(t))}[Q_\alpha+z(t)].
\]

As $W_0$ solves \eqref{NLS0} and $H$ is gauge- and translation-invariant, we have 
\begin{equation}\label{HW}
H(W_0(t))\equiv H( W(t))\equiv H(W_0(T^*)),
\end{equation}
while by explicit calculation we obtain
\begin{align}
H(u) - H(W) & = \tfrac12\int |\nabla z|^2\,dx + \tfrac12\alpha^2\int |z|^2\,dx  - \tfrac12\int Q_\alpha^2[3z_1^2 + z_2^2]\,dx  \label{Lappears} \\
& \quad + \int [\alpha^2 Q_\alpha -\Delta Q_\alpha - Q_\alpha^3]z_1 \,dx \label{Qaeq} \\
& \quad -\int Q_\alpha z_1 |z|^2\,dx - \tfrac14\int |z|^4\,dx,\label{HOT}
\end{align}
where $z=z_1+iz_2$.  We now use the operators $L_\alpha^\pm$ defined in \eqref{Ls} to rewrite \eqref{Lappears}, and observe that by \eqref{Qeq} we have $\eqref{Qaeq}\equiv 0$.  Thus, using \eqref{bound1} as well, the identity above may be rewritten
\begin{equation}\label{HuHW1}
H(u) - H(W) = \tfrac12\langle L_\alpha^+ z_1,z_1\rangle + \tfrac12\langle L_\alpha^- z_2,z_2\rangle + \mathcal{O}_\alpha(\|z\|_{H^1}^3). 
\end{equation}

Now, $H(u(t))$ is \emph{not} exactly conserved. Indeed, using \eqref{NLS} and the decomposition of $u$, we compute
\begin{align*}
\tfrac{d}{dt} H(u(t))  & = -\Im\int \bar u \nabla V \cdot \nabla u \,dx + \xi\cdot\int \nabla V\, |u|^2\,dx  \\
& = -\Im \int \nabla V(x+2\xi t + y(t)) \cdot\bigl\{Q_\alpha \nabla z + \bar z \nabla Q_\alpha+ \bar z\nabla z\bigr\}\,dx. 
\end{align*}

Applying the Fundamental Theorem of Calculus, \eqref{HW}, and Corollary~\ref{C:coercivity}, we therefore obtain
\begin{align}
c\|z(t)\|_{H^1}^2 & \leq |H(u(T^*))-H(W_0(T^*))| \label{bs1}\\ 
& \quad + C(\alpha)\biggl[ |\langle z_2, iQ_\alpha\rangle|^2 + \sum_{j=1}^3 |\langle z_1, \partial_{x_j} Q_\alpha\rangle|^2 \biggr]\label{bs2} \\
& \quad + C\sum_{\Psi\in S} \int_t^{T^*} \| \nabla V \,\Psi(\cdot-\eta(s))\|_{L^2} \|z(s)\|_{H^1} \label{bs3} \\
& \quad + C\int_t^{T^*} \|\nabla V\|_{L^4} \|z(s)\|_{H^1}^2\,ds \label{bs4}\\
& \quad + C(\alpha)\biggl[ \sum_{\sigma\in\{\pm\}} |\langle iz(t),Y_\alpha^\sigma\rangle|^2+ \|z(t)\|_{H^1}^3\biggr],\label{bs5}
\end{align}
where $S=\{Q_\alpha,\partial_{x_1}Q_\alpha,\partial_{x_2}Q_\alpha,\partial_{x_3}Q_\alpha\}$ and 
\[
\eta(s):=2s\xi + y(s).
\]

Now let us utilize the estimates \eqref{bound1}--\eqref{bound3} and \eqref{bound4} that we have on $[T,T^*]$ to estimate \eqref{bs1}--\eqref{bs5}.

First, using \eqref{HuHW1}, the definition of $u(T^*)$ in \eqref{final-data}, and \eqref{finalstate0}, we have
\[
|\eqref{bs1}| \lesssim \|z(T^*)\|_{H^1}^2 \lesssim |\beta^+|^2+|\beta^-|^2 \lesssim e^{-\delta\alpha|\xi| t}. 
\]

Next, by the orthogonality conditions \eqref{orthogonality}, we have $\eqref{bs2}\equiv 0$. 

For \eqref{bs3}, we will use the decay assumption \eqref{V} and apply Lemma~\ref{L:WI} with $c=\delta\alpha$ and $\eta=\eta(s)$. Note that by using \eqref{bound2} and $s\geq T_0\geq 1$, we have
\begin{equation}\label{ApplyWI}
\tfrac12\delta\alpha|\eta(s)| \geq \delta\alpha s|\xi|-\tfrac12\delta\alpha|y(s)|\geq \tfrac23 \delta\alpha |\xi| s. 
\end{equation} 
Thus, using \eqref{bound1} as well, we find
\[
|\eqref{bs3}| \lesssim \int_t^{T^*} e^{-\delta\alpha |\xi| s}\,ds \lesssim e^{-\delta\alpha|\xi|t}. 
\]

The term \eqref{bs4} will be treated via using Gronwall's inequality. 

For \eqref{bs5}, we use \eqref{bound1}, \eqref{bound3}, and \eqref{bound4} to obtain
\[
|\eqref{bs5}| \lesssim e^{-\delta\alpha|\xi| t}. 
\] 

Combining all of the estimates above, we obtain
\[
\|z(t)\|_{H^1}^2 \leq C(\alpha) e^{-\delta\alpha|\xi|t} + \int_t^{T^*} C\|\nabla V\|_{L^4} \|z(s)\|_{H^1}^2\,ds.
\]
Noting that the first constraint in \eqref{Pdelta} guarantees
\[
\delta\alpha|\xi|>C\|\nabla V\|_{L^4},
\]
the bound \eqref{bound12} now follows from an application of Gronwall's inequality. 

We turn to the proof of the estimate \eqref{bound22} on the modulation parameters. In light of \eqref{finalstate0}, it suffices to prove that
\begin{equation}\label{bound22-pf}
|\dot\gamma(t)| + |\dot y(t)| \lesssim e^{-\frac12\delta\alpha|\xi|t}. 
\end{equation}
To this end, we first observe that direct calculation using \eqref{NLS} and \eqref{Qeq} leads to the following evolution equation for $z$:
\begin{equation}\label{zeq}
\begin{aligned}
i\partial_t z &= L_\alpha^+ z_1 + iL_\alpha^- z_2 +  V(\cdot+2t\xi+y(t))(Q_\alpha+z) \\
& \quad + \dot\gamma(Q_\alpha+z) +i\dot y\cdot\nabla(Q_\alpha+z)  - [2Q_\alpha |z|^2 + Q_\alpha z^2 + |z|^2 z].
\end{aligned}
\end{equation}
Thus, differentiating the relations \eqref{orthogonality}, we obtain
\begin{equation}\label{DO}
\langle \Re \text{RHS}\eqref{zeq},Q_\alpha\rangle = \langle \Im \text{RHS}\eqref{zeq},\partial_{x_j} Q_\alpha\rangle = 0,\quad j\in\{1,2,3\}. 
\end{equation}
We now rewrite the first equation in \eqref{DO} as 
\begin{align*}
\dot\gamma  \langle Q_\alpha+z&,Q_\alpha\rangle - \dot y \cdot\langle \nabla z_2,Q_\alpha\rangle \\
 & = -\langle z_1, L_\alpha^+ Q_\alpha\rangle - \langle V(\cdot+2t\xi + y(t))(Q_\alpha+z_1),Q_\alpha\rangle \\
& \quad  + \langle 2 Q_\alpha |z|^2 + Q_\alpha\Re(z^2) + |z|^2 z_1,Q_\alpha\rangle,
\end{align*}
and fixing $j\in\{1,2,3\}$, we rewrite the second equation in \eqref{DO} as
\begin{align*}
\dot y \cdot  \langle\nabla&(Q_\alpha+z_1),\partial_{x_j} Q_\alpha\rangle  + \dot\gamma\langle z_2,\partial_{x_j} Q_\alpha\rangle \\
& = -\langle z_2, L_\alpha^- \partial_{x_j} Q_\alpha\rangle - \langle V(\cdot+2t\xi+y(t))z_2,\partial_{x_j} Q_\alpha\rangle \\
& \quad + \langle Q_\alpha\Im(z^2) + |z|^2 z_2,\partial_{x_j} Q_\alpha\rangle. 
\end{align*}
Writing $\pi = [ \gamma\ y ]^t\in \R^{1+3}$, we can combine these four equations into a system of the form
\[
X_1 \dot \pi + X_2 \dot \pi = X_3,
\]
where
\[
X_1 := \text{diag}\{\langle Q_\alpha,Q_\alpha\rangle, \langle \partial_{x_1}Q_\alpha,\partial_{x_1}Q_\alpha\rangle, \langle \partial_{x_2}Q_\alpha,\partial_{x_2}Q_\alpha\rangle, \langle \partial_{x_3}Q_\alpha,\partial_{x_3}Q_\alpha\rangle\} 
\]
and we have the bounds
\begin{align*}
\|X_2\| & \lesssim \|z(t)\|_{H^1}, \\
\|X_3\| &\lesssim \|z(t)\|_{H^1} + \|V(\cdot + 2t\xi + y(t))Q_\alpha\|_{L^2}\bigl\{\|Q_\alpha\|_{L^2}+\|z\|_{L^2}\bigr\}  \\
& \quad + \|z(t)\|_{H^1}^2+\|z(t)\|_{H^1}^3.
\end{align*}
In particular, using \eqref{bound12}, we obtain 
\begin{equation}\label{pidotbd}
|\dot \gamma(t)|+|\dot y(t)| \lesssim \sum_{j=1}^3 \|z(t)\|_{H^1}^j +  \|V(\cdot + 2t\xi + y(t))Q_\alpha\|_{L^2},
\end{equation}
which by \eqref{bound12} and Lemma~\ref{L:WI} (as applied above) yields \eqref{bound22-pf} and hence \eqref{bound22}. 

It remains to prove \eqref{bound32}.  To this end, we first use the definition \eqref{zA} and \eqref{zeq} to obtain
\begin{align*}
\dot A_\alpha^+ & = \langle \text{RHS}\eqref{zeq},Y_\alpha^+\rangle=:\langle L_\alpha^+ z_1 + i L_\alpha^- z_2, Y_\alpha^+\rangle + \mathcal{E}_\alpha^+,
\end{align*}
where explicitly we have
\begin{equation}\label{Eplus}
\begin{aligned}
\mathcal{E}_\alpha^+ & = \langle V(\cdot+2t\xi+y(t))(Q_\alpha+z),Y_\alpha^+\rangle + \dot\gamma\langle z,Y_\alpha^+\rangle + \dot y \cdot\langle i\nabla z,Y_\alpha^+\rangle\\
& \quad - \langle 2Q_\alpha|z|^2+Q_\alpha z^2 + |z|^2 z,Y_\alpha^+\rangle.
\end{aligned}
\end{equation}
We note that the terms $\dot\gamma\langle Q_\alpha,Y_\alpha^+\rangle$ and $\dot y\cdot \langle i\nabla Q_\alpha,Y_\alpha^+\rangle$ in $\mathcal{E}_\alpha^+$ vanish identically due to the orthogonality relations \eqref{QY-orthog}. 

We now observe that
\[
\langle L_\alpha^+ z_1 + i L_\alpha^- z_2,Y_\alpha^+\rangle = \langle iz,\mathcal{L}_\alpha Y_\alpha^+\rangle = \alpha^2\lambda_1 A_\alpha^+(t), 
\]
while estimating as above (using \eqref{bound12}, \eqref{bound22-pf}, and Lemma~\ref{L:WI} with \eqref{ApplyWI}) leads to
\begin{align*}
|\mathcal{E}_\alpha^+(t)| & \lesssim \{|\dot\gamma(t)|+|\dot y(t)|\}\|z(t)\|_{H^1}\|Y_\alpha^+\|_{L^2} \\
& \quad + \|V(\cdot + 2t\xi + y(t))Y_\alpha^+\|_{L^2}[\|Q_{\alpha}\|_{L^2}+\|z(t)\|_{L^2}] \\
& \quad +   \|z(t)\|_{H^1}^2 + \|z(t)\|_{H^1}^3 \\
& \lesssim e^{-\delta\alpha|\xi|t}+ e^{-\frac23\delta\alpha|\xi|t} 
\end{align*}
Thus 
\[
\dot A_\alpha^+(t) - \alpha^2 \lambda_1 A_\alpha^+(t) = \mathcal{O}(e^{-\frac23\delta\alpha|\xi|t}),
\]
which (recalling $A_\alpha^+(T^*)=0$) integrates to give \eqref{bound32}.\end{proof}
 
\section{Main construction}\label{S:construction}

We turn to the main construction in this paper, namely, a solution to \eqref{NLS} that remains close to a traveling wave on an arbitrarily long interval of $[T_0, T^*]$. 

\begin{proposition}\label{P:construction} Let $\delta\in(0,\delta_0)$ and $(\alpha,\xi)\in \mathcal{P}_\delta$.  Importing all notation from Proposition~\ref{P:modulation}, there exists 
\[
a\in I^*:=[-e^{-\frac12\delta\alpha|\xi|T^*},e^{-\frac12\delta\alpha|\xi|T^*}]
\]
such that the solution constructed in Proposition~\ref{P:modulation} with
\begin{equation}\label{finalstate0a2}
(A_\alpha^+(T^*),A_\alpha^-(T^*))=(0,a)
\end{equation}
exists on $[T_0,T^*]$ satisfies \eqref{bound4}--\eqref{bound32}  on this interval. 
\end{proposition}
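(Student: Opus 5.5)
We argue by contradiction using the topological shooting argument sketched in the introduction. For each $a\in I^*$, let $u_a$ be the solution from Proposition~\ref{P:modulation}. Define
\[
\mathcal{N}(t;a):=\bigl|e^{\frac12\delta\alpha|\xi|t}A_\alpha^-(t;a)\bigr|^2,
\]
and the exit time
\[
\tau(a):=\inf\{T\in[T_0,T^*]:\ u_a \text{ exists on }[T,T^*]\text{ and }\mathcal{N}(t;a)\le 1\ \forall t\in[T,T^*]\}.
\]
Suppose the conclusion fails, so that $\tau(a)>T_0$ for every $a\in I^*$.

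\textbf{The solution can be extended while $\mathcal{N}\le1$.} As long as \eqref{bound4} holds on $[T,T^*]$ with $T\ge T_0$, Proposition~\ref{P:bootstrap} gives \eqref{mainbd}. Since $T_0$ was chosen so that $e^{-\frac1{10}\delta\alpha|\xi|T_0}\ll\eps_0$, this strictly improves \eqref{bound0}, and it also strictly improves the bounds \eqref{bound1}--\eqref{bound3}: the rates $\frac12,\frac23$ beat $\frac13,\frac12$, and any implicit constants are absorbed by taking $T_0$ large. By the extension clause in Proposition~\ref{P:modulation} and continuity, the solution and the modulation parameters extend slightly past $T$ with all these bounds intact. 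Hence the only way the construction can stop at $\tau(a)>T_0$ is that $\mathcal{N}(\tau(a);a)=1$.

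\textbf{Transversality.} This is the key computation. Exactly as for $A_\alpha^+$ in the proof of Proposition~\ref{P:bootstrap}, using $\mathcal{L}_\alpha Y_\alpha^-=-\alpha^2\lambda_1Y_\alpha^-$ we get
\[
\dot A_\alpha^-=-\alpha^2\lambda_1A_\alpha^-+\mathcal{E}_\alpha^-,\qquad |\mathcal{E}_\alpha^-|\lesssim e^{-\frac23\delta\alpha|\xi|t}
\]
on $[\tau(a),T^*]$. At a time $t$ with $\mathcal{N}(t;a)=1$ we have $|A_\alpha^-|=e^{-\frac12\delta\alpha|\xi|t}$, so
\[
\tfrac{d}{dt}\mathcal{N}=2e^{\delta\alpha|\xi|t}\bigl[(\tfrac12\delta\alpha|\xi|-\alpha^2\lambda_1)|A_\alpha^-|^2+A_\alpha^-\mathcal{E}_\alpha^-\bigr]\le 2(\tfrac12\delta\alpha|\xi|-\alpha^2\lambda_1)+Ce^{-\frac16\delta\alpha|\xi|t}.
\]
By the second constraint in \eqref{Pdelta}, $\tfrac12\delta\alpha|\xi|-\alpha^2\lambda_1<-\tfrac12\alpha^2\lambda_1<0$. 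Enlarging $T_0$ so that the error term is smaller than this gap gives $\tfrac{d}{dt}\mathcal{N}<0$ whenever $\mathcal{N}=1$ and $t\ge T_0$. Two consequences follow:
\begin{itemize}
\item For $a\in\partial I^*$ we have $\mathcal{N}(T^*;a)=1$ with negative derivative, so $\mathcal{N}>1$ just before $T^*$. Hence $\tau(a)=T^*$.
\item For interior $a$, the level $1$ is crossed transversally at $\tau(a)$.
\end{itemize}

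\textbf{Continuity of $\tau$ and the contradiction.} The map $a\mapsto \beta^\pm$ is linear, so Proposition~\ref{P:LWP} gives continuous dependence of $u_a(t)$ in $H^1$ on $a$. The modulation parameters, and hence $A_\alpha^-(t;a)$, then depend continuously on $a$ via the implicit function construction of Lemma~\ref{L:modulation}; one may also argue directly from the ODE. This is the most delicate point: one must check that $(t,a)\mapsto\mathcal{N}(t;a)$ and $\partial_t\mathcal{N}$ are jointly continuous. Given that, the strict transversality yields continuity of $a\mapsto\tau(a)$, either by the Implicit Function Theorem or directly: $\mathcal{N}<1$ on $(\tau(a),T^*]$ away from $T^*$, $\mathcal{N}>1$ just before $\tau(a)$, and both conditions persist under small perturbations of $a$. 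The interior case $\tau(a)=T^*$ with $|a|<$ endpoint cannot occur, since then $\mathcal{N}(T^*)<1$. Now set $\Theta(a):=e^{\frac12\delta\alpha|\xi|\tau(a)}A_\alpha^-(\tau(a);a)$. This map is continuous from $I^*$ to $\{\pm1\}$. By the first consequence above, $\Theta(\pm e^{-\frac12\delta\alpha|\xi|T^*})=\pm1$, which contradicts the connectedness of $I^*$ (equivalently, the Intermediate Value Theorem gives $a$ with $\Theta(a)=0\neq\pm1$). Therefore some $a\in I^*$ has $\tau(a)=T_0$, so \eqref{bound4} holds on $[T_0,T^*]$, and Proposition~\ref{P:bootstrap} gives \eqref{bound12}--\eqref{bound32} there.
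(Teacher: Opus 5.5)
Your proposal is correct and follows essentially the same route as the paper. It uses the same exit time $\tau(a)$: the paper builds \eqref{bound0}--\eqref{bound3} into its notion of a \emph{stable} interval, while you recover them through the bootstrap and continuity argument. It then runs through the same chain: the transversality bound $\partial_t\mathcal{N}\le -\tfrac12\alpha^2\lambda_1$ at $\mathcal{N}=1$ from the second constraint in \eqref{Pdelta}, $\tau(a)=T^*$ at the endpoints, continuity of $\tau$ via the Implicit Function Theorem, and a connectedness contradiction where your normalized $\Theta(a)\in\{\pm1\}$ is just a rescaling of the paper's $h(a)=A_\alpha^-(\tau(a))$ combined with the Intermediate Value Theorem.
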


\begin{proof} Given $a\in I^*$, we consider the solution $u$ constructed in Proposition~\ref{P:modulation} satisfying \eqref{finalstate0a2}. Let us call an interval $I$ of the form $[T,T^*]$ \emph{stable for $u$} if the bounds \eqref{bound0}, \eqref{bound1}--\eqref{bound3}, and \eqref{bound4} hold for all $t\in I$.  As observed in Proposition~\ref{P:modulation}, the condition \eqref{bound0} on $I$ is enough to guarantee that the solution $u$ and the modulation parameters imposing the orthogonality conditions can be defined on an open interval containing $I$.  Let us also remark that \eqref{bound1}--\eqref{bound2} themselves imply \eqref{bound0} provided $t\geq T_0$.    

To construct the desired solution, it suffices to show that there exists $a\in I^*$ such that $[T_0,T^*]$ is stable for $u$.  Indeed, in this case Proposition~\ref{P:bootstrap} allows us to improve \eqref{bound1}--\eqref{bound3} to the stronger estimates \eqref{bound12}--\eqref{bound32}. To this end, we define 
\[
\tau(a) := \inf\{T\geq T_0: [T,T^*]\text{ is stable for }u\}
\]
and let us suppose towards a contradiction that $\tau(a)>T_0$ for all $a\in I^*$. 

We now introduce the function 
\[
\mathcal{N}(t;a) := |e^{\frac12\delta\alpha|\xi| t} A_\alpha^-(t)|^2,
\]
which we may define on some open interval containing $[\tau(a),T^*]$. 

By continuity of the flow and Proposition~\ref{P:modulation}, for any $a\in (I^*)^\circ$ we have $\tau(a)<T^*$.  Moreover, if $I=[T,T^*]$ is stable for $u$, then Proposition~\ref{P:bootstrap} implies the improved bounds \eqref{bound12}--\eqref{bound32}, so that (using $T_0\gg 1$) we have
\begin{align*}
\|z(t)\|_{H^1} & \lesssim e^{-\frac12\delta\alpha|\xi| t} \leq\tfrac12 C_1 e^{-\frac13\delta\alpha|\xi| t}, \\
|\gamma(t)|+|y(t)| & \lesssim e^{-\frac12\delta\alpha|\xi|t} \leq \tfrac12 C_2 e^{-\frac13\delta\alpha |\xi| t}, \\
|A_\alpha^+(t)| &\lesssim e^{-\delta\alpha|\xi| t}\leq \tfrac12 e^{-\frac12\delta\alpha|\xi| t}
\end{align*}
for all $t\in I$. It follows that the failure of stability can only be due to the failure of \eqref{bound4}; more precisely, given $a\in (I^*)^\circ$ we have that
\begin{equation}\label{Ntaua}
\mathcal{N}(\tau(a);a)=1.
\end{equation}
We will see below that $\tau(a)=T^*$ for $a\in\partial I^*$, so that \eqref{Ntaua} in fact holds for all $a\in I^*$. 

We now claim that the following differential inequality holds for $\mathcal{N}$: if $t\geq T_0$ is a time at which the modulation constraint \eqref{bound1} and the improved bound \eqref{bound12} on $z$ hold, then 
\begin{equation}\label{DIN}
\begin{aligned}
\partial_t \mathcal{N}(t;a) & \leq -\bigl\{2\alpha^2 \lambda_1 - \delta\alpha|\xi|\bigr\}\mathcal{N}(t;a) + C(\alpha)e^{-\frac16\delta\alpha|\xi|t}\sqrt{N(t;a)}.
\end{aligned}
\end{equation}
To prove \eqref{DIN}, we first introduce the quantity $\mathcal{E}_\alpha^-(t)$ (the analogue of the quantity $\mathcal{E}_\alpha^+$ defined in \eqref{Eplus}), which (proceeding as in the proof of Proposition~\ref{P:bootstrap}) satisfies
\[
\dot A_\alpha^- = -\alpha^2\lambda_1 A_\alpha^- + \mathcal{E}_\alpha^-.
\]
We then have by direct calculation that
\[
\partial_t \mathcal{N}(t;a) = -\bigl\{2\alpha^2\lambda_1 - \delta\alpha|\xi|\bigr\}\mathcal{N}(t;a) + 2\sqrt{\mathcal{N}(t;a)} e^{\frac12\delta\alpha|\xi|t} \mathcal{E}_\alpha^-(t). 
\]
Now, the arguments in Proposition~\ref{P:bootstrap} showed that the bound \eqref{bound12} on $z$ leads to the pointwise bounds \eqref{bound22-pf} on $|\dot\gamma|$ and $|\dot y|$.  Together with Lemma~\ref{L:WI}, which controls the eigenfunction-potential interaction, these bounds were used to obtain the bound $|\mathcal{E}_\alpha^+(t)|\lesssim e^{-\frac23\delta\alpha|\xi|t}$ in the proof of Proposition~\ref{P:bootstrap}.  The same arguments apply here to give $|\mathcal{E}_\alpha^-(t)| \lesssim e^{-\frac23\delta\alpha|\xi|t}$.  Inserting this estimate into the preceding display then yields \eqref{DIN}. 

Using \eqref{DIN} and the second parameter constraint in \eqref{Pdelta}, we can now observe the following fact: if $T\geq T_0$ is a time at which we have \eqref{bound1}, \eqref{bound12}, and $\mathcal{N}(T;a)=1$, then 
\begin{equation}\label{DP}
\tfrac{d}{dt}\mathcal{N}(t;a)\big|_{t=T} \leq -\tfrac12 \alpha^2\lambda_1<0.
\end{equation}
It follows that if $a\in\partial I^*$, then $\tau(a) = T^*$.  Indeed, the bounds \eqref{bound1} and \eqref{bound12} hold at $t=T^*$ by construction.  Thus as $\mathcal{N}(T^*;a)=1$ for $a\in\partial I^*$, the property \eqref{DP} guarantees that `stability' fails immediately backwards in time.  In particular, the only stable interval for $u$ is $[T^*,T^*]$, which forces $\tau(a)=T^*$. 

We next establish continuity of the map $\tau:I^*\to(T_0,T^*]$. We fix $a_*\in I^*$ and consider the function $f(t,a) = \mathcal{N}(t;a)-1$, which (using well-posedness of \eqref{NLS}) may be defined in a sufficiently small neighborhood of $(\tau(a_*),a_*)$.  Observing that $f(\tau(a_*),a_*)=0$ and applying \eqref{DP} at $(t,a)=(\tau(a_*),a_*)$, the Implicit Function Theorem implies the existence of a continuous function $\sigma$ defined in a neighborhood of $a_*$ such that $f(t,a)=0$ if and only if $t=\sigma(a)$.  Recalling \eqref{Ntaua}, we find that $\tau(a)=\sigma(a)$ for $a$ near $a_*$, so that $\tau$ is continuous at $a_*$. 

It follows that the map $h:I^*\to \R$ defined by $h(a)=A_\alpha^-(\tau(a))$ is continuous and satisfies $h(a)=a$ for $a\in\partial I^*$. In particular, by the Intermediate Value Theorem, there must exist $a_*\in (I^*)^\circ$ such that $h(a_*)=0$. However, this implies that $\mathcal{N}(\tau(a_*);a_*)=0$, which contradicts \eqref{Ntaua} and thus completes the proof. \end{proof}

\section{Proof of the main result}\label{S:proof}

We turn to the proof of our main result, Theorem~\ref{T}.  With Proposition~\ref{P:construction} in place, we can follow the argument of \cite{CMM} quite closely. 

\begin{proof}[Proof of Theorem~\ref{T}] We fix $\delta\in(0,\delta_0)$ and $(\alpha,\xi)\in\mathcal{P}_\delta$ and take a sequence $T_n\nearrow\infty$ with $T_n>T_0$ for all $n$. For each $n$, let $u_n:[T_0,T_n]\times\R^3\to\C$ be the solution constructed in Proposition~\ref{P:construction} with $T^*=T_n$. 

We begin by establishing the following $L^2$-tightness property at the fixed time $T_0$:
\begin{equation}\label{tight}
\lim_{M\to\infty}\limsup_{n\to\infty} \int_{|x|>M} |u_n(T_0,x)|^2\,dx = 0.
\end{equation}

We let $\eta>0$ and, using \eqref{mainbd}, choose $n$ sufficiently large and $T_\eta\in(T_0,T_n)$ sufficiently large that 
\[
\|u_n(T_\eta) - \mathcal{T}_{(T_\eta;\alpha,\xi,0,0)}Q_\alpha\|_{L^2}<\eta. 
\]
We now choose $M_0=M_0(\eta,\alpha,\xi)$ such that
\[
\|\mathcal{T}_{(T_\eta;\alpha,\xi,0,0)}Q_\alpha\|_{L^2(|x|>M_0)} < \eta. 
\]
Combining the previous two inequalities, we obtain
\begin{equation}\label{tight2}
\|u_n(T_\eta)\|_{L^2(|x|>M_0)} \lesssim \eta. 
\end{equation}

We now introduce a smooth radial $\chi$ function that cuts off to the region $|x|>K$, where $K$ will be determined below, and we compute using \eqref{NLS}:
\begin{align*}
\biggl|\tfrac{d}{dt} \int |u_n(t,x)|^2 \chi(x)\,dx \biggr| & = \biggl| 2\Im \int \bar u_n \nabla u_n\cdot \nabla \chi \,dx \biggr| \\
& \lesssim K^{-1} \|u_n\|_{L_t^\infty H_x^1([T_0,T_n]\times\R^3)}^2 \lesssim K^{-1}
\end{align*}
for any $t\in [T_0,T_n]$, where we have used \eqref{mainbd} to control the $L_t^\infty H_x^1$-norm. Thus, applying the Fundamental Theorem of Calculus and choosing $K>M_0$ sufficiently large, we obtain 
\[
\|u_n(T_0)\|_{L^2(|x|>K)}^2 \lesssim \|u_n(T_\eta)\|_{L^2(|x|>K)}^2 + K^{-1}(T_\eta-T_0) \lesssim \eta^2, 
\]
which yields \eqref{tight}. 

Next, using $H^1$-boundedness and passing to a subsequence, we see that there exists $v_0\in H^1$ such that $u_n(T_0)\rightharpoonup v_0$ weakly in $H^1$ as $n\to\infty$.  In fact, using the Rellich--Kondrachov Theorem and \eqref{tight}, we obtain that $u_n(T_0)\to v_0$ strongly in $L^2$. Appealing to $H^1$-boundedness once again, we may upgrade this to strong $H^s$ convergence for any $s\in[0,1)$. 

Now let $v:I_{\max}\times\R^3\to C$ the maximal-lifespan solution to \eqref{NLS} with $v(T_0)=v_0$. Now fix $t\in[T_0,\infty)$ and observe that for all $n$ sufficiently large, the solution $u_n$ exists at time $t$. Thus, by well-posedness for \eqref{NLS} and the fact that $u_n(T_0)\to v_0$ in $H^s$, we have that $v$ must exist at time $t$, with $u_n(t)\to v(t)$ in $H^s$ for $s\in[0,1)$. Moreover, by $H^1$ boundedness, we may further obtain $u_n(t)\rightharpoonup v(t)$ weakly in $H^1$.  Using weak lower-semicontinuity and \eqref{mainbd}, we therefore find
\[
\|v(t) - \mathcal{T}_{(t;\alpha,\xi,0,0)}Q_\alpha\|_{H^1} \leq \liminf_{n\to\infty} \|u_n(t) - \mathcal{T}_{(t,\alpha,\xi,0,0)}Q_\alpha\|_{H^1} \lesssim e^{-\frac12\delta\alpha|\xi| t}. 
\]
This allows us to obtain uniform $H^1$ bounds for $v(t)$ on $[T_0,\infty)$ (so that $v$ is forward-global) and yields the convergence asserted in \eqref{H1C}.\end{proof}


\end{document}